\newtheorem{lemma}{Lemma}
\newtheorem{thm}{Theorem}
\newtheorem{remark}{Remark}
\newtheorem{tb}{Table}
\numberwithin{equation}{section}
\begin{document}

\leftline{ \scriptsize \it  }
\title[]
{Approximation of Discontinuous Functions by Kantorovich Exponential Sampling Series}
\maketitle

\begin{center}
{\bf A. Sathish Kumar$^1,$ Prashant Kumar$^2$ and P. Devaraj$^3$}\\

\vskip0.2in
$^{1,2}$Department of Mathematics, Visvesvaraya National Institute of Technology Nagpur, Nagpur, Maharashtra-440010, India\\
\verb"mathsatish9@gmail.com" and \verb''pranwd92@gmail.com

\vskip0.2in
$^3$School of Mathematics, Indian Institute of Science Education and Research, Thiruvananthapuram, India.\\
\verb"devarajp@iisertvm.ac.in"
\end{center}

\begin{abstract}
The Kantorovich exponential sampling series $I_{w}^{\chi}f$ at jump discontinuities of the bounded measurable signal $f:\mathbb{R}^{+} \rightarrow \mathbb{R}$ has been analysed. A representation lemma for the series $I_{w}^{\chi}f$ is established and using this lemma certain approximation theorems for discontinuous signals are proved.
The degree of approximation in terms of logarithmic modulus of smoothness for the series $I_{w}^{\chi}f$ is studied. Further a linear prediction of signals based on past sample values has been obtained. Some numerical simulations are performed to validate the approximation of discontinuous signals $f$ by $I_{w}^{\chi}f.$
\\
\vskip0.001in

\noindent Keywords: Kantorovich Exponential Sampling Series, Discontinuous Signals, Logarithmic Modulus of Smoothness, Mellin Transform, Degree of Approximation.
\\
\vskip0.001in
\noindent Mathematics Subject Classification(2010): 41A35, 41A25, 26A15.
\end{abstract}

\section{Introduction}

\subsection{Preliminaries and Basic Assumptions}
First we give some basic definitions and notations. Let $L^{\infty}(\mathbb{R}^+)$ denote the set of all Lebesgue measurable essentially bounded functions on the set of all positive real numbers $\mathbb{R}^+$. For $c \in \mathbb{R}$, we define
$$X_c=\{f :\mathbb{R}^+\rightarrow\mathbb{C}:f(\cdot)(\cdot)^{c-1}\in L^1(\mathbb{R}^+)\}$$ and the norm on $X_c$ is given by
$$\Vert f \Vert_{X_c}=\Vert f(\cdot)(\cdot)^{c-1} \Vert_1=\int_0^{+\infty} |f(v)|v^{c-1}dv.$$
The Mellin transform of a function  $f \in X_c$ is defined as
$$\widehat{[f]_{M}}(s):= \int_0^{+\infty} v^{s-1}f(v)\ dv \ , \,  \ (s = c + it, t \in \mathbb{R})$$
and that it is said to be Mellin band-limited on $[-\kappa, \kappa],$ if $\widehat{[f]_{M}}(c+it)=0$ for all $|t|>\kappa,$ where $\kappa$ is some positive real number.
Butzer and Jansche in \cite{butzer5} have initiated the study of exponential sampling. For a suitable kernel $ \chi :\mathbb{R}^{+} \rightarrow \mathbb{R}$ satisfying certain conditions, Bardaro et.al. \cite{bardaro7} have analysed the uniform and point-wise convergence of the following generalized sampling series for bounded continuous functions:
\begin{equation} \label{classical}
(S_{w}^{\chi}f)(t)= \sum_{k=- \infty}^{+\infty} \chi(e^{-k} t^{w}) f( e^{\frac{k}{w}})
\end{equation}
The above sampling series $S_{w}^{\chi}f$ is well defined for $f\in L^{\infty}(\mathbb{R}^{+}).$ In \cite{bardaro11}, the exponential sampling series (\ref{classical}) was studied for functions in Mellin-Lebesgue spaces. The Mellin theory received a lot of attentions due to analysis by Mamedov \cite{mamedeo}. Further it was developed by Butzer et.al. in various research articles (see \cite{butzer3} -\cite{ butzer7}). Some of the work which are related to Mellin transform and Mellin approximation can be seen in \cite{bardaro1,bardaro9,bardaro2,bardaro3}. Bardaro et.al. \cite{bardaro3} analysed the analogue of Paley-Wiener theorem for Mellin transform and pointed out that it is different from that of Fourier transform. The exponential sampling series is used in certain areas like light scattering, Fraunhofer diffraction and radio astronomy, etc (see \cite{bert, cas, gori, ost}).

Let $ \chi :\mathbb{R}^{+} \rightarrow \mathbb{R}.$ For $\nu \in \mathbb{N}_{0}:=\mathbb{N}\cup \{0\},$ the algebraic and absolute moment of order $\nu$ are defined by
$$ m_{\nu}(\chi,u):= \sum_{k= - \infty}^{+\infty}  \chi(e^{-k} u) (k- \log u)^{\nu}, \hspace{0.5cm} \forall \ u \in \mathbb{R}^{+}$$
and
$$ M_{\nu}(\chi,u):= \sum_{k= - \infty}^{+\infty}  |\chi(e^{-k} u)| |k- \log u|^{\nu},  \hspace{0.5cm} \forall \ u \in \mathbb{R}^{+}.$$
We define $ \displaystyle M_{\nu}(\chi):= \sup_{u \in \mathbb{R}^{+}} M_{\nu}(\chi,u). $ We say that $ \chi$ is kernel if it satisfies the following conditions:
\begin{itemize}
\item[(i)] $\chi\in L^{1}(\mathbb{R}^+)$ and $\chi$ is bounded on $\left[\dfrac{1}{e},e\right],$

\item[(ii)] for every $ u \in \mathbb{R}{^{+}},$ $\displaystyle  \sum_{k=- \infty}^{+\infty} \chi(e^{-k}u) =1,$

\item[(iii)]for some $\nu>0,$  $\displaystyle M_{\nu}(\chi,u)=\sup_{u \in \mathbb{R}^{+}}  \sum_{k= - \infty}^{+\infty}  |\chi(e^{-k} u)| |k- \log u|^{\nu}<+\infty.$

\end{itemize}

\subsection{Kantorovich Exponential Sampling Series }

The exponential sampling series $S_{w}^{\chi}f$ is used to approximate a non-Mellin band limited function $f$ using its values at the nodes $ (e^{\frac{k}{w}}).$ Measuring the exact value of the sample is difficult and measurement process depends on the aperture device used for sampling. A suitable mathematical model for such a measurement process is replacing $ f( e^{\frac{k}{w}})$ by the local average of signal $f$ on the interval $ \left[e^{\frac{k}{w}}, e^{\frac{k+1}{w}} \right].$ This motivates the consideration of Kantorovich version of the exponential sampling series. In 2020, Kantorovich exponential sampling series $S_{w}^{\chi}f$ was introduced and their inverse and direct approximation results have been studied in \cite{SKB}. For $ f: \mathbb{R}^{+} \rightarrow \mathbb{R},$ the Kantorovich exponential sampling series of $f$ is defined by (see \cite{SKB})
\begin{equation} \label{main}
(I_{w}^{\chi}f)(x)= \sum_{k= - \infty}^{+\infty} \chi(e^{-k} x^{w})\  w \int_{\frac{k}{w}}^{\frac{k+1}{w}} f(e^{u})\  du.
\end{equation}
Under the conditions (i) (ii) and (iii) on the kernel $\chi,$ it is clear that $I_{w}^{\chi}f$  is well defined for $f \in L^{\infty}(\mathbb{R}^{+})$ at every $x \in \mathbb{R}^{+}.$
Direct convergence results and the improved order of approximation for Kantorovich exponential sampling series has been analysed in \cite{SKB1}. Further, the Durrmeyer type modification of exponential sampling series (\ref{classical}) is considered in \cite{Durr} and \cite{SKB2}. So far the approximation of continuous functions by Kantorovich exponential sampling series $I_{w}^{\chi}f$  has been studied. The Kantorovich exponential sampling series $I_{w}^{\chi}f$ for discontinuous signals has not been analysed. Motivated by Butzer et.al.\cite{discont} and \cite{Maria}, we study the behaviour of Kantorovich exponential sampling series $I_{w}^{\chi}f$ at jump discontinuity of bounded measurable signal $f.$

\section{Approximation of Discontinuous Functions by $I_{w}^{\chi}f$}

We analyse the behaviour of the Kantorovich exponential sampling series $I_{w}^{\chi}f$ as  $w \rightarrow \infty$ at a jump discontinuity of $f$, i.e., at a point where the limits
$$ f(t+0):=\lim_{y \rightarrow 0^{+}}f(t+y),$$ and $$ f(t-0):=\lim_{y \rightarrow 0^{+}}f(t-y)$$
exist and are different. We denote $\psi_{\chi}^{+}$ and $\psi_{\chi}^{-}$ two functions from $\mathbb{R}^{+}$ to $\mathbb{R}$ which are defined by
$$\psi_{\chi}^{+}(u):=\sum _{k<\log u}\chi(ue^{-k}),$$
and
$$ \psi_{\chi}^{-}(u):=\sum _{k>\log u}\chi(ue^{-k}),$$
where $\chi$ is a kernel function and one can easily verify that $\psi_{\chi}^{+}(u)$ and $\psi_{\chi}^{-}(u)$ are recurrent functions with fundamental interval $[1, e]$. We first prove the following representation lemma for the series $I_{w}^{\chi}f.$ Throughout this section we assume that $f(t+0)$ and $f(t-0)$ exist and are finite.

\begin{lemma}\label{lemma1}
Let $f:\mathbb{R}^{+}\rightarrow\mathbb{R}$ be a bounded measurable signal and $t\in\mathbb{R}^{+}$ be fixed. Let $h_{t}:\mathbb{R}^{+}\rightarrow\mathbb{R}$ be defined by
\begin{eqnarray*} \label{M3}
h_{t} (x) =
\left\{
\begin{array}{ll}
{f(x)-f(t-0),} & \mbox{if} \ x <t \\

{f(x)-f(t+0),} &\mbox{if} \  x >t\\

{0,}  &\mbox{if,} \ x=t.
\end{array}
\right.
\end{eqnarray*}
Then, there holds:
\begin{eqnarray*}
(I_{w}^{\chi}f)(t)&=&(I_{w}^{\chi}h_{t})(t)+f(t-0)+\left[\chi(e^{w\log t-\lfloor w\log t \rfloor} )+\psi_{\chi}^{-}(t^{w})\right](f(t+0)-f(t-0)) \\&&-\chi\left(e^{w\log t-\lfloor w\log t \rfloor} \right) \left(w\log t- \lfloor w\log t \rfloor\right)(f(t+0)-f(t-0)),
\end{eqnarray*}
if $ w\log(t)\notin\mathbb{Z},$ where $\lfloor. \rfloor$ denotes the integer part of a given number and
\begin{eqnarray*}
(I_{w}^{\chi}f)(t)=(I_{w}^{\chi}h_{t})(t)+f(t-0)+\left(\chi(1)+\psi_{\chi}^{-}(t^{w})\right)[f(t+0)-f(t-0)],
\end{eqnarray*}
if $w\log(t)\in\mathbb{Z},$ $w>0.$
\end{lemma}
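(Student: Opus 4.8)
The plan is to decompose $f$ additively as $f=h_{t}+g_{t}$, where $g_{t}:=f-h_{t}$ is the two-valued step function with $g_{t}(x)=f(t-0)$ for $x<t$, $g_{t}(x)=f(t+0)$ for $x>t$ (and $g_{t}(t)=f(t)$, a value on a null set that is irrelevant for the averages below). Since $f$ is bounded and measurable, so are $h_{t}$ and $g_{t}$, hence $I_{w}^{\chi}h_{t}$ and $I_{w}^{\chi}g_{t}$ are well defined; using $\sum_{k}|\chi(e^{-k}t^{w})|<+\infty$ (a consequence of (i) and (iii)) the series converge absolutely, so by linearity
$$(I_{w}^{\chi}f)(t)=(I_{w}^{\chi}h_{t})(t)+(I_{w}^{\chi}g_{t})(t).$$
Thus everything reduces to evaluating $(I_{w}^{\chi}g_{t})(t)=\sum_{k\in\mathbb{Z}}\chi(e^{-k}t^{w})\,w\int_{k/w}^{(k+1)/w}g_{t}(e^{u})\,du$ explicitly.

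To do this I would set $n:=\lfloor w\log t\rfloor$ and $\theta:=w\log t-n\in[0,1)$, and split the sum over $k$ according to the position of $\log t$ relative to the interval $[k/w,(k+1)/w]$. For $k\le n-1$ this interval lies, up to a null set, to the left of $\log t$, so $w\int_{k/w}^{(k+1)/w}g_{t}(e^{u})\,du=f(t-0)$; for $k\ge n+1$ it lies to the right, so the average equals $f(t+0)$; for the transitional index $k=n$ the point $\log t=(n+\theta)/w$ splits the interval into pieces of lengths $\theta/w$ and $(1-\theta)/w$ (when $\theta\in(0,1)$), giving the average $\theta f(t-0)+(1-\theta)f(t+0)$. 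Since $e^{-n}t^{w}=e^{w\log t-n}=e^{\theta}$ and, from the definitions, $\sum_{k\le n-1}\chi(e^{-k}t^{w})=\psi_{\chi}^{+}(t^{w})-\chi(e^{\theta})$ while $\sum_{k\ge n+1}\chi(e^{-k}t^{w})=\psi_{\chi}^{-}(t^{w})$, collecting terms expresses $(I_{w}^{\chi}g_{t})(t)$ as an explicit linear combination of $f(t-0)$ and $f(t+0)$.

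Finally, invoking the normalization identity $\psi_{\chi}^{+}(u)+\psi_{\chi}^{-}(u)=1$ valid whenever $\log u\notin\mathbb{Z}$ — which is exactly (ii), there being no integer term $k=\log u$ — lets me rewrite that combination in the centred form $f(t-0)+[\chi(e^{\theta})+\psi_{\chi}^{-}(t^{w})](f(t+0)-f(t-0))-\theta\chi(e^{\theta})(f(t+0)-f(t-0))$, which together with the already separated $(I_{w}^{\chi}h_{t})(t)$ gives the first asserted formula. The case $w\log t=n\in\mathbb{Z}$ is handled the same way, the only changes being that the interval $[n/w,(n+1)/w]$ now contributes the average $f(t+0)$ (it meets $\log t$ only at its left endpoint), that $e^{-n}t^{w}=1$, and that (ii) now reads $\psi_{\chi}^{+}(t^{w})+\chi(1)+\psi_{\chi}^{-}(t^{w})=1$; this yields the second formula. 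I expect the only delicate point to be the bookkeeping: correctly matching the partial sums $\sum_{k<n}$ and $\sum_{k>n}$ of $\chi(e^{-k}t^{w})$ to $\psi_{\chi}^{+}(t^{w})$ and $\psi_{\chi}^{-}(t^{w})$, and not losing the single transitional index $k=n$ or the fractional part $\theta$; everything else is elementary once the decomposition $f=h_{t}+g_{t}$ is in place.
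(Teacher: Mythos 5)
Your proposal is correct and is essentially the paper's own argument in a slightly cleaner packaging: writing $f=h_{t}+g_{t}$ and evaluating $(I_{w}^{\chi}g_{t})(t)$ for the explicit step function amounts to the same splitting of the sum at $k=\lfloor w\log t\rfloor$, the same treatment of the transitional interval via $\theta=w\log t-\lfloor w\log t\rfloor$, and the same use of condition (ii) that the paper carries out by expanding $(I_{w}^{\chi}h_{t})(t)$ and rearranging. All the key identifications (the averages $f(t\mp 0)$ on the one-sided intervals, the mixed average $\theta f(t-0)+(1-\theta)f(t+0)$ at $k=\lfloor w\log t\rfloor$, and the matching of the partial sums with $\psi_{\chi}^{\pm}(t^{w})$) check out in both the non-integer and integer cases.
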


\begin{proof}
Let $w\log(t)\notin \mathbb{Z},$ and $w>0.$ Then, we have
\begin{eqnarray*}
  (I_{w}^{\chi}h_{t})(t)&=&\sum _{k<\lfloor w\log t \rfloor}\chi(e^{-k} t^{w})w\int_{\frac{k}{w}}^{\frac{k+1}{w}}(f(e^{u})-f(t-0))du\\
  \\&&+\sum _{k>\lfloor w\log t \rfloor}\chi(e^{-k} t^{w})w\int_{\frac{k}{w}}^{\frac{k+1}{w}}(f(e^{u})-f(t+0)) du\\
  \\&&+\chi\left(e^{w\log t-\lfloor w\log t \rfloor} \right) w\left( \int_{\frac{\lfloor w\log t \rfloor}{w}}^{\log t}(f(e^{u})-f(t-0))du+\int_{\log t}^{\frac{\lfloor w\log t \rfloor+1}{w}}(f(e^{u})-f(t+0))du\right) \\
  &=&\sum _{k<\lfloor w\log t \rfloor}\chi(e^{-k} t^{w})w\int_{\frac{k}{w}}^{\frac{k+1}{w}}f(e^{u})du-f(t-0)\sum _{k<\lfloor w\log t \rfloor}\chi(e^{-k} t^{w})\\
  \\&&+\sum _{k>\lfloor w\log t \rfloor}\chi(e^{-k} t^{w})w\int_{\frac{k}{w}}^{\frac{k+1}{w}}f(e^{u})du-f(t+0)\sum _{k>\lfloor w\log t \rfloor}\chi(e^{-k} t^{w})\\
  \\&&+\chi\left(e^{w\log t-\lfloor w\log t \rfloor} \right) w\int_{\frac{\lfloor w\log t \rfloor}{w}}^{\frac{\lfloor w\log t \rfloor +1}{w}}f(e^{u})du
-\chi\left(e^{w\log t-\lfloor w\log t \rfloor} \right)   \\&&\times w\left(f(t-0)\left(\log t-{\frac{\lfloor w\log t \rfloor}{w}}\right)+f(t+0)\left({\frac{\lfloor w\log t \rfloor +1}{w}}-\log t\right)\right)\\
  &=& (I_{w}^{\chi}f)(t)-f(t-0)\sum _{k<\lfloor w\log t \rfloor}\chi(e^{-k} t^{w})-f(t+0)\sum _{k>\lfloor w\log t \rfloor}\chi(e^{-k} t^{w})\\\
  \\&&-\chi\left(e^{w\log t-\lfloor w\log t \rfloor} \right) \left[f(t-0)(w\log t-{{\lfloor w\log t \rfloor}})+f(t+0)({\lfloor w\log t \rfloor +1}-w\log t)\right].
\end{eqnarray*}
  Adding and subtracting $\displaystyle f(t-0)\sum _{k\geq \lfloor w\log(t) \rfloor}\chi(e^{-k} t^{w})$ in the above equation, we obtain
 \begin{eqnarray*}
  (I_{w}^{\chi}f)(t)&=&(I_{w}^{\chi}h_{t})(t)+f(t-0)\sum _{k\in \mathbb{Z}}\chi(e^{-k} t^{w})-f(t-0)\sum _{k>\lfloor w\log t \rfloor}\chi(e^{-k} t^{w})\\
  \\&&-f(t-0)\chi\left(e^{w\log t-\lfloor w\log t \rfloor} \right) +f(t+0)\chi\left(e^{w\log t-\lfloor w\log t \rfloor} \right) +f(t+0)\sum _{k>\lfloor w\log t \rfloor}\chi(e^{-k} t^{w})\\
  \\&&+\chi\left(e^{w\log t-\lfloor w\log t \rfloor} \right) \Big[ f(t-0)(w\log t-{{\lfloor w\log t \rfloor}})+f(t+0)({\lfloor w\log t \rfloor }-w\log t)\Big].
   \end{eqnarray*}
As $w\log t$ is not an integer, one can easily verify that
   $$ \psi_{\chi}^{-}(t^{w})=\sum _{k>\lfloor w\log t \rfloor}\chi(e^{-k} t^{w})$$ and in view of condition (ii) we obtain
\begin{eqnarray*}
(I_{w}^{\chi}f)(t)&=&(I_{w}^{\chi}h_{t})(t)+f(t-0)+\left[\chi\left(e^{w\log t-\lfloor w\log t \rfloor} \right)+\psi_{\chi}^{-}(t^{w})\right](f(t+0)-f(t-0)) \\&&-\chi\left(e^{w\log t-\lfloor w\log t \rfloor} \right) \left(w\log t- \lfloor w\log t \rfloor\right)(f(t+0)-f(t-0)).
\end{eqnarray*}
Now we analyse the case that $w\log(t)\in \mathbb{Z},$ and $w>0.$ We have
\begin{eqnarray*}
  (I_{w}^{\chi}h_{t})(t)&=&\sum _{k<w\log t }\chi(e^{-k} t^{w})w\int_{\frac{k}{w}}^{\frac{k+1}{w}}(f(e^{u})-f(t-0))du\\
  \\&&+\sum _{k\geq w\log t }\chi(e^{-k} t^{w})w\int_{\frac{k}{w}}^{\frac{k+1}{w}}(f(e^{u})-f(t+0)) du.
\end{eqnarray*}
Adding and subtracting $\displaystyle f(t-0)\sum _{k\geq  w\log(t)}\chi(e^{-k} t^{w})$ and using the second condition on the kernel, we get
 \begin{eqnarray*}
  (I_{w}^{\chi}f)(t)&=&(I_{w}^{\chi}h_{t})(t)+f(t-0)+(f(t+0)-f(t-0))\sum _{k\geq  w\log(t)}\chi(e^{-k} t^{w})\\
  &=&(I_{w}^{\chi}h_{t})(t)+f(t-0)+\left(\chi(1)+\psi_{\chi}^{-}(t^{w})\right)[f(t+0)-f(t-0)].
    \end{eqnarray*}
\end{proof}

We recall the following basic convergence result for continuous functions proved in \cite{SKB}.
\begin{thm}\label{t1}
Let $ f:\mathbb{R}^{+} \rightarrow \mathbb{R} $ be a bounded function . Then, $(I_w^{\chi}{f})(t)$ converges to  $f(t)$ at any point $t$ of continuity. Further,
 if $f\in C(\mathbb{R}^{+}),$ then
$$ \lim_{ w \rightarrow \infty} \| f- I_w^{\chi}{f}\|_{\infty} = 0.$$
\end{thm}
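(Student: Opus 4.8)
The plan is to reduce the claim to the two structural conditions on the kernel: the partition-of-unity identity (ii) and the moment bound (iii). Fix a point $t$ of continuity. Applying (ii) at $u=t^{w}$ gives $\sum_{k}\chi(e^{-k}t^{w})=1$, and since $w\int_{k/w}^{(k+1)/w}du=1$, I can write
\[
(I_{w}^{\chi}f)(t)-f(t)=\sum_{k=-\infty}^{+\infty}\chi(e^{-k}t^{w})\,w\!\int_{k/w}^{(k+1)/w}\bigl(f(e^{u})-f(t)\bigr)\,du,
\]
so that
\[
\bigl|(I_{w}^{\chi}f)(t)-f(t)\bigr|\le\sum_{k=-\infty}^{+\infty}|\chi(e^{-k}t^{w})|\,w\!\int_{k/w}^{(k+1)/w}|f(e^{u})-f(t)|\,du.
\]

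Next I would split the index set by the size of $|k-w\log t|$. Given $\varepsilon>0$, continuity of $f$ at $t$ — equivalently of $u\mapsto f(e^{u})$ at $u=\log t$ — provides $\delta>0$ with $|f(e^{u})-f(t)|<\varepsilon$ whenever $|u-\log t|<2\delta$; take $w$ so large that $1/w<\delta$. For indices with $|k-w\log t|\le w\delta$, every $u\in[k/w,(k+1)/w]$ satisfies $|u-\log t|\le|k/w-\log t|+1/w<2\delta$, so these terms contribute at most $\varepsilon\sum_{k}|\chi(e^{-k}t^{w})|\le\varepsilon M_{0}(\chi)$. For indices with $|k-w\log t|>w\delta$, I use $|f(e^{u})-f(t)|\le 2\|f\|_{\infty}$ together with $|k-\log(t^{w})|^{\nu}>(w\delta)^{\nu}$ to obtain
\[
\sum_{|k-w\log t|>w\delta}|\chi(e^{-k}t^{w})|\le(w\delta)^{-\nu}\sum_{k}|\chi(e^{-k}t^{w})|\,|k-\log(t^{w})|^{\nu}\le(w\delta)^{-\nu}M_{\nu}(\chi),
\]
so these terms contribute at most $2\|f\|_{\infty}M_{\nu}(\chi)(w\delta)^{-\nu}$. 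Combining, $|(I_{w}^{\chi}f)(t)-f(t)|\le\varepsilon M_{0}(\chi)+2\|f\|_{\infty}M_{\nu}(\chi)(w\delta)^{-\nu}$; letting $w\to\infty$ and then $\varepsilon\to0$ gives the pointwise assertion. (I use here the standard fact that conditions (i) and (iii) force $M_{0}(\chi)<\infty$: the boundedly many indices with $|k-\log u|\le1$ are handled by boundedness of $\chi$ on $[1/e,e]$, and the remaining ones by $M_{\nu}(\chi)$, since $\nu>0$ makes $|k-\log u|^{\nu}\ge1$ there.)

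For the uniform statement, the same estimate goes through word for word once $\delta$ can be chosen independently of $t$, which is exactly the uniform continuity of $u\mapsto f(e^{u})$ on $\mathbb{R}$ — the meaning of $f\in C(\mathbb{R}^{+})$ in the Mellin framework. Then the final bound $\varepsilon M_{0}(\chi)+2\|f\|_{\infty}M_{\nu}(\chi)(w\delta)^{-\nu}$ is uniform in $t$; taking the supremum over $t\in\mathbb{R}^{+}$ and letting $w\to\infty$, then $\varepsilon\to0$, yields $\|f-I_{w}^{\chi}f\|_{\infty}\to0$.

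The only delicate point is making the near/far split clean and uniform: the indices whose interval $[k/w,(k+1)/w]$ straddles the boundary between the two regions must be absorbed into the ``near'' part, which is why the continuity neighbourhood is taken of radius $2\delta$ while the split threshold is $w\delta$ and $w$ is required to satisfy $1/w<\delta$. Beyond that, the argument is a routine application of (ii) and (iii).
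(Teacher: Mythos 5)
The paper does not prove this theorem at all: it is quoted verbatim from the reference [SKB] ("Direct and inverse results for Kantorovich type exponential sampling series"), so there is no in-paper proof to compare against. Your argument is correct and is essentially the standard one (it also parallels the proof of Theorem~\ref{t7} later in the paper): subtract $f(t)$ using the partition-of-unity condition (ii) and $w\int_{k/w}^{(k+1)/w}du=1$, split the index set at $|k-w\log t|\le w\delta$, control the near part by continuity plus $M_0(\chi)$ (correctly deduced finite from (i) and (iii)), and kill the far tail by the factor $(w\delta)^{-\nu}M_\nu(\chi)$. The one point worth flagging is the hypothesis for the uniform statement: with the paper's literal definition of $C(\mathbb{R}^+)$ as bounded continuous functions, uniform convergence actually fails (e.g.\ $f(x)=\sin(1/x)$ is bounded and continuous on $\mathbb{R}^+$ but not log-uniformly continuous, and the averages $w\int_{k/w}^{(k+1)/w}f(e^u)\,du$ wash out the oscillation near $0$). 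The correct hypothesis is $f\in\mathcal{C}(\mathbb{R}^+)$, i.e.\ uniform continuity of $u\mapsto f(e^u)$ on $\mathbb{R}$, which is exactly what your proof uses and what the cited source assumes; your reading silently repairs what is presumably a typo in the statement.
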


Now we show that the non-removable jump discontinuity function can be approximated by Kantorovich exponential sampling series $I_w^{\chi}f$ for $w\log(t)\in \mathbb{Z}.$

\begin{thm}\label{t2}
Suppose that $f$ has a non-removable jump discontinuity at $t\in\mathbb{R}^{+}$ and that $\alpha \in \mathbb{R}.$ Then, the following are equivalent:
\begin{enumerate}
\item[(i)] $\displaystyle \lim_{\stackrel{w \rightarrow \infty}{w\log(t)\in \mathbb{Z}}}
(I_w^{\chi}f)(t)=[\chi(1)+\alpha] f(t+0)+[1-\alpha-\chi(1)]f(t-0), \hspace{0.25cm}$\\

\item[(ii)] $\psi_{\chi}^{-}(1):=\alpha,$  \\

\item[(iii)] $\psi_{\chi}^{+}(1):=1-\alpha-\chi(1). $
\end{enumerate}
\end{thm}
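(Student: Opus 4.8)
The plan is to read off the value of the limit in (i) directly from the representation Lemma \ref{lemma1} in the case $w\log t\in\mathbb{Z}$, and then to observe that (i), (ii) and (iii) are linked by just two elementary facts: the vanishing of $(I_{w}^{\chi}h_{t})(t)$ in the limit, supplied by Theorem \ref{t1}, and the partition-of-unity identity $\psi_{\chi}^{+}(1)+\chi(1)+\psi_{\chi}^{-}(1)=1$.

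First I would verify that the auxiliary function $h_{t}$ from Lemma \ref{lemma1} is admissible for Theorem \ref{t1} at the point $t$: it is bounded because $f$ is, and it is continuous at $t$ with $h_{t}(t)=0$, since its one-sided limits there are $f(t-0)-f(t-0)=0$ and $f(t+0)-f(t+0)=0$. Hence $(I_{w}^{\chi}h_{t})(t)\to h_{t}(t)=0$ as $w\to\infty$, with no restriction on how $w$ runs to infinity, so in particular along the sequence $w\log t\in\mathbb{Z}$.

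Next I would use that $\psi_{\chi}^{-}$ is recurrent with fundamental interval $[1,e]$, i.e. $\psi_{\chi}^{-}(eu)=\psi_{\chi}^{-}(u)$, so that $\psi_{\chi}^{-}(e^{n}u)=\psi_{\chi}^{-}(u)$ for every $n\in\mathbb{Z}$. When $w\log t\in\mathbb{Z}$ we have $t^{w}=e^{w\log t}$ with integer exponent, so $\psi_{\chi}^{-}(t^{w})=\psi_{\chi}^{-}(1)$, a constant along this sequence of $w$. Substituting these two facts into the second formula of Lemma \ref{lemma1} and letting $w\to\infty$ with $w\log t\in\mathbb{Z}$ gives
$$\lim_{\stackrel{w\to\infty}{w\log(t)\in\mathbb{Z}}}(I_{w}^{\chi}f)(t)=f(t-0)+\bigl(\chi(1)+\psi_{\chi}^{-}(1)\bigr)\bigl(f(t+0)-f(t-0)\bigr),$$
which I would rewrite as $\bigl(\chi(1)+\psi_{\chi}^{-}(1)\bigr)f(t+0)+\bigl(1-\chi(1)-\psi_{\chi}^{-}(1)\bigr)f(t-0)$. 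Comparing with the right-hand side of (i) and subtracting, the difference equals $\bigl(\psi_{\chi}^{-}(1)-\alpha\bigr)\bigl(f(t+0)-f(t-0)\bigr)$; since the jump at $t$ is non-removable, $f(t+0)\neq f(t-0)$, so the difference vanishes if and only if $\psi_{\chi}^{-}(1)=\alpha$, giving (i)$\Leftrightarrow$(ii). For (ii)$\Leftrightarrow$(iii) I would evaluate the definitions of $\psi_{\chi}^{\pm}$ at $u=1$, where the $k=0$ term contributes $\chi(1)$, and invoke condition (ii) on the kernel to obtain $\psi_{\chi}^{+}(1)+\chi(1)+\psi_{\chi}^{-}(1)=\sum_{k\in\mathbb{Z}}\chi(e^{-k})=1$; hence $\psi_{\chi}^{-}(1)=\alpha$ is equivalent to $\psi_{\chi}^{+}(1)=1-\alpha-\chi(1)$.

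The argument is essentially bookkeeping once Lemma \ref{lemma1} is in hand; the only points requiring a little care are the continuity of $h_{t}$ at $t$ (it is precisely this correction term that tames the series at the jump, so this step is what makes the whole scheme run) and the observation that the recurrence of $\psi_{\chi}^{-}$ freezes $\psi_{\chi}^{-}(t^{w})$ at the value $\psi_{\chi}^{-}(1)$ exactly along $w\log t\in\mathbb{Z}$. I do not anticipate a genuine obstacle.
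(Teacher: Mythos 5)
Your proposal is correct and follows essentially the same route as the paper: apply Lemma \ref{lemma1} in the case $w\log t\in\mathbb{Z}$, kill $(I_{w}^{\chi}h_{t})(t)$ via Theorem \ref{t1} using continuity of $h_{t}$ at $t$, freeze $\psi_{\chi}^{-}(t^{w})=\psi_{\chi}^{-}(1)$ by recurrence, and close the equivalences with the non-removability of the jump and the partition-of-unity identity. If anything, you are slightly more careful than the paper in spelling out why $f(t+0)\neq f(t-0)$ is needed to cancel the jump factor.
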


\begin{proof}
Let $w\log(t)\in \mathbb{Z}.$ In view of Lemma \ref{lemma1}, we have
\begin{eqnarray*}
  (I_{w}^{\chi}f)(t)=(I_{w}^{\chi}h_{t})(t)+f(t-0)+(\chi(1)+\psi_{\chi}^{-}(t^{w}))[f(t+0)-f(t-0)],
   \end{eqnarray*}
for any $w>0.$ Since $h_{t}$ is bounded and continuous at zero and applying Theorem \ref{t1}, we get
$$\displaystyle\lim_{w \rightarrow \infty }(I_{w}^{\chi}h_{t})(t)=0.$$ Hence, we have
\begin{eqnarray*}
\displaystyle \lim_{\stackrel{w \rightarrow \infty}{w\log(t)\in \mathbb{Z}}}
(I_w^{\chi}f)(t)=f(t-0)+\left(\chi(1)+\displaystyle \lim_{\stackrel{w \rightarrow \infty}{w\log(t)\in \mathbb{Z}}} \psi_{\chi}^{-}(t^{w})\right) [f(t+0)-f(t-0)].
\end{eqnarray*}
Since $\psi_{\chi}^{-}$ is recurrent function with fundamental domain $[1, e],$ we get
$$\psi_{\chi}^{-}(t^{w})=\psi_{\chi}^{-}(1), \,\ \forall w, t \,\ \mbox{such that}  \,\ w\log(t)\in \mathbb{Z}.$$
Thus, we obtain
\begin{eqnarray*}
\displaystyle \lim_{\stackrel{w \rightarrow \infty}{w\log(t)\in \mathbb{Z}}}
(I_w^{\chi}f)(t)=[\chi(1)+\psi_{\chi}^{-}(1)] f(t+0)+[1-\psi_{\chi}^{-}(1)-\chi(1)]f(t-0).
\end{eqnarray*}

\indent $\mbox{Now}\,\  (i) \Longleftrightarrow
[\chi(1)+\alpha] f(t+0)+[1-\alpha-\chi(1)]f(t-0)$
\begin{eqnarray*}
&=&[\chi(1)+\psi_{\chi}^{-}(1)] f(t+0)+[1-\psi_{\chi}^{-}(1)-\chi(1)]f(t-0)\\
&\Longleftrightarrow& \psi_{\chi}^{-}(1)(f(t+0)-f(t-0))=\alpha(f(t+0)-f(t-0))\\
&\Longleftrightarrow& \psi_{\chi}^{-}(1)=\alpha\\
&\Longleftrightarrow& (ii)\,\ \mbox{holds}.
 \end{eqnarray*}
   Since $\displaystyle\sum_{k=- \infty}^{+\infty} \chi(e^{-k} t^{w})=1,$ we have
   $$\psi_{\chi}^{+}(1)=1-{\chi}(1)-\psi_{\chi}^{-}(1).$$
   This implies that $(ii)\Longleftrightarrow(iii).$ Hence, the proof is completed.
\end{proof}

Next in order to obtain the convergence of Kantorovich exponential sampling series $I_w^{\chi}f$ at a non-removable jump discontinuity at $t\in\mathbb{R}^{+}$ when $w\log(t)\notin \mathbb{Z}$ an extra condition on the kernel is required. We prove the following theorem.

\begin{thm}\label{t3}
Let $f:\mathbb{R}^{+}\rightarrow\mathbb{R}$ be a bounded measurable signal with a non-removable jump discontinuity at $t\in\mathbb{R}^{+}$ and let $\alpha \in \mathbb{R}.$ Suppose that $\chi(u)=0,$ for every $u\in(1,e).$ Then, the following statements are equivalent:
\begin{enumerate}
\item[(i)] $\displaystyle \lim_{\stackrel{w \rightarrow \infty}{w\log(t)\notin \mathbb{Z}}} (I_w^{\chi}f)(t)=\alpha f(t+0)+(1-\alpha)f(t-0),$\\

\item[(ii)] $\psi_{\chi}^{-}(u):=\alpha, \hspace{0.5cm} u \in(1, e) $ \\

\item[(iii)]  $\psi_{\chi}^{+}(u):=1-\alpha, \hspace{0.5cm} u\in(1, e).$

\end{enumerate}
\end{thm}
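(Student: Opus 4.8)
The plan is to follow the template of the proof of Theorem \ref{t2}, combining Lemma \ref{lemma1} with Theorem \ref{t1}; the genuinely new ingredient is that the hypothesis $\chi(u)=0$ on $(1,e)$ annihilates the boundary terms that obstructed convergence in the non-integer case. I would fix $t$ with the prescribed jump and assume $t\neq1$, since otherwise no $w>0$ satisfies $w\log(t)\notin\mathbb{Z}$. For such $w$ the fractional part $\theta:=w\log t-\lfloor w\log t\rfloor$ lies in the \emph{open} interval $(0,1)$, hence $e^{\theta}\in(1,e)$ and $\chi(e^{\theta})=0$. Substituting this into the first identity of Lemma \ref{lemma1}, every term carrying the factor $\chi\!\left(e^{w\log t-\lfloor w\log t\rfloor}\right)$ drops out and the representation collapses to
\[
(I_w^{\chi}f)(t)=(I_w^{\chi}h_t)(t)+f(t-0)+\psi_\chi^-(t^w)\bigl(f(t+0)-f(t-0)\bigr).
\]

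Since $f$ is bounded and $h_t$ is continuous at $t$ (both one-sided limits of $h_t$ at $t$ equal $0$), Theorem \ref{t1} gives $(I_w^{\chi}h_t)(t)\to h_t(t)=0$ as $w\to\infty$. Moreover $\psi_\chi^-$ is recurrent with fundamental interval $[1,e]$, so writing $t^w=e^{\lfloor w\log t\rfloor}e^{\theta}$ yields $\psi_\chi^-(t^w)=\psi_\chi^-(e^{\theta})$; thus the limiting behaviour of $(I_w^{\chi}f)(t)$ along the net $w\to\infty$, $w\log(t)\notin\mathbb{Z}$, is entirely governed by the values of $\psi_\chi^-$ on $(1,e)$. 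This already delivers (ii)$\Rightarrow$(i): if $\psi_\chi^-\equiv\alpha$ on $(1,e)$, then $\psi_\chi^-(t^w)=\alpha$ for every admissible $w$, and the displayed identity forces $(I_w^{\chi}f)(t)\to f(t-0)+\alpha\bigl(f(t+0)-f(t-0)\bigr)=\alpha f(t+0)+(1-\alpha)f(t-0)$.

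For the reverse implication (i)$\Rightarrow$(ii), the displayed identity together with $(I_w^{\chi}h_t)(t)\to0$ shows that (i) is equivalent to $\psi_\chi^-(t^w)\bigl(f(t+0)-f(t-0)\bigr)\to\alpha\bigl(f(t+0)-f(t-0)\bigr)$, and since the jump is non-removable I may cancel $f(t+0)-f(t-0)\neq0$ to get $\psi_\chi^-(t^w)\to\alpha$. The decisive step I would then carry out: because $t\neq1$, the map $w\mapsto w\log t$ is a continuous surjection of $(0,\infty)$ onto an unbounded interval, so for any prescribed $\theta_0\in(0,1)$ one can choose $w_n\to\infty$ with $w_n\log t-\lfloor w_n\log t\rfloor=\theta_0$; along this sequence $\psi_\chi^-(t^{w_n})=\psi_\chi^-(e^{\theta_0})$ is constant, whence $\psi_\chi^-(e^{\theta_0})=\alpha$. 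Letting $\theta_0$ sweep $(0,1)$, so that $e^{\theta_0}$ sweeps $(1,e)$, gives $\psi_\chi^-(u)=\alpha$ for all $u\in(1,e)$, that is, (ii).

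Finally, (ii)$\Leftrightarrow$(iii) follows at once from condition (ii) on the kernel: for $u\in(1,e)$ we have $\log u\in(0,1)$, which is not an integer, so no index $k$ equals $\log u$ and consequently $\psi_\chi^+(u)+\psi_\chi^-(u)=\sum_{k\in\mathbb{Z}}\chi(ue^{-k})=1$ throughout $(1,e)$; hence $\psi_\chi^-\equiv\alpha$ on $(1,e)$ is equivalent to $\psi_\chi^+\equiv1-\alpha$ on $(1,e)$. I expect the only delicate point to be the recurrence/surjectivity argument in (i)$\Rightarrow$(ii): one must be sure that the net $w\to\infty$ with $w\log t\notin\mathbb{Z}$ actually detects \emph{every} value taken by $\psi_\chi^-$ on $(1,e)$, which is exactly why the recurrence of $\psi_\chi^-$ and the continuity of $w\mapsto w\log t$ are both needed; the rest is a routine adaptation of the proof of Theorem \ref{t2}.
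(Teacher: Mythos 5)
Your proof is correct and follows essentially the same route as the paper: apply Lemma \ref{lemma1}, observe that the hypothesis $\chi\equiv 0$ on $(1,e)$ annihilates every term carrying $\chi\bigl(e^{w\log t-\lfloor w\log t\rfloor}\bigr)$, send $(I_w^{\chi}h_t)(t)\to 0$ via Theorem \ref{t1}, and reduce (i) to the behaviour of $\psi_\chi^{-}(t^w)$, with (ii)$\Leftrightarrow$(iii) coming from $\psi_\chi^{+}+\psi_\chi^{-}=1$ off the integer nodes. The only difference is that you explicitly justify the step $\lim_{w}\psi_\chi^{-}(t^w)=\alpha\Leftrightarrow\psi_\chi^{-}\equiv\alpha$ on $(1,e)$ (via the fixed-fractional-part subsequences and the $t\neq 1$ caveat), which the paper asserts without detail.
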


\begin{proof}
Using Lemma \ref{lemma1} and the condition that $\chi(u)=0,$ for every $u\in(1,e),$ we obtain
\begin{eqnarray*}
 \displaystyle \lim_{\stackrel{w \rightarrow \infty}{w\log(t)\notin \mathbb{Z}}} (I_{w}^{\chi}f)(t)
 =f(t-0)+\left(\displaystyle \lim_{\stackrel{w \rightarrow \infty}{w\log(t)\notin \mathbb{Z}}}\psi_{\chi}^{-}(t^{w})\right)[f(t+0)-f(t-0)].
   \end{eqnarray*}
   \begin{eqnarray*}
   (i)&\Longleftrightarrow &
\alpha f(t+0)+(1-\alpha)f(t-0)=f(t-0)+\left(\displaystyle \lim_{\stackrel{w \rightarrow \infty}{w\log(t)\notin \mathbb{Z}}}\psi_{\chi}^{-}(t^{w})\right)[f(t+0)-f(t-0)]\\
&\Longleftrightarrow &
\alpha [f(t+0)-f(t-0)]=\left(\displaystyle \lim_{\stackrel{w \rightarrow \infty}{w\log(t)\notin \mathbb{Z}}}\psi_{\chi}^{-}(t^{w})\right)[f(t+0)-f(t-0)]\\
&\Longleftrightarrow &
\alpha=\displaystyle \lim_{\stackrel{w \rightarrow \infty}{w\log(t)\notin \mathbb{Z}}}
\psi_{\chi}^{-}(t^{w})\\
&\Longleftrightarrow &
\alpha=\psi_{\chi}^{-}(u),  \hspace{0.5cm} \forall u \in(1, e)\\
&\Longleftrightarrow& (ii)\,\ \mbox{holds}.
\end{eqnarray*}
Let $w\log(t)\notin \mathbb{Z}.$ Since $\psi_{\chi}^{+}$ and $\psi_{\chi}^{-}$ are recurrent functions on $[1,e],$ we have
$$\psi_{\chi}^{+}(t^{w})+\psi_{\chi}^{-}(t^{w})=1.$$ Hence, we obtain
$$(ii)\Longleftrightarrow \psi_{\chi}^{+}(u)=1-\alpha, \hspace{0.5cm} u\in(1, e) .$$
\end{proof}

The above Theorem is proved by assuming that $\chi(u)=0,$ for every $u\in(1,e).$ In the following theorem we prove that without this condition, it is not possible to show the convergence of $I_w^{\chi}f$ at jump discontinuities.

\begin{thm}\label{t4}
 Let $\chi$ be a kernel such that $\psi_{\chi}^{-}(u)=\alpha$ for every $u\in(1,e)$ and $\chi(u)\neq 0,$ for some $u\in(1,e).$  Let $f:\mathbb{R}^{+}\rightarrow\mathbb{R}$ be a bounded measurable signal with a non-removable jump discontinuity $t\in\mathbb{R}^{+}.$ Then $(I_w^{\chi}f)(t)$ does not converge  point-wise at $t$.
\end{thm}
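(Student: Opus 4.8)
The plan is to reduce the whole question, via the representation Lemma~\ref{lemma1}, to the oscillation of a single scalar quantity, and then to use the boundedness of $\chi$ near $e$ (condition (i)) to manufacture two subsequences of $w$ along which $(I_w^{\chi}f)(t)$ has distinct limits.

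First I would dispose of the degenerate case $t=1$: there $w\log t=0\in\mathbb{Z}$ for every $w$, and Theorem~\ref{t2} already gives convergence, so the assertion concerns $t\neq 1$, i.e. $\log t\neq 0$. Fix such a $t$, put $J:=f(t+0)-f(t-0)\neq 0$, and for $w$ with $w\log t\notin\mathbb{Z}$ write $\theta_w:=w\log t-\lfloor w\log t\rfloor\in(0,1)$. Lemma~\ref{lemma1} then reads
\[
(I_w^{\chi}f)(t)=(I_w^{\chi}h_t)(t)+f(t-0)+\bigl[\chi(e^{\theta_w})(1-\theta_w)+\psi_{\chi}^{-}(t^{w})\bigr]J .
\]
Since $t^{w}=e^{\theta_w}e^{\lfloor w\log t\rfloor}$, recurrence of $\psi_{\chi}^{-}$ on $[1,e]$ together with the hypothesis $\psi_{\chi}^{-}(u)=\alpha$ on $(1,e)$ forces $\psi_{\chi}^{-}(t^{w})=\psi_{\chi}^{-}(e^{\theta_w})=\alpha$. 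Moreover $h_t$ is bounded and continuous at $t$ with $h_t(t)=0$, so Theorem~\ref{t1} gives $(I_w^{\chi}h_t)(t)\to 0$ as $w\to\infty$. Hence, along any sequence $w\to\infty$ with $w\log t\notin\mathbb{Z}$, $(I_w^{\chi}f)(t)$ differs by a null sequence from $f(t-0)+\bigl[\alpha+g(\theta_w)\bigr]J$, where $g(\theta):=\chi(e^{\theta})(1-\theta)$ on $(0,1)$.

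The second step is to observe that $g$ is not constant. By hypothesis $\chi(u_0)\neq 0$ for some $u_0\in(1,e)$; writing $u_0=e^{\theta_0}$ with $\theta_0\in(0,1)$, this says $g(\theta_0)=\chi(u_0)(1-\theta_0)\neq 0$. On the other hand, $\chi$ is bounded on $[1/e,e]$ by condition (i), so with $C:=\sup_{u\in[1,e]}|\chi(u)|<\infty$ one has $|g(\theta)|\le C(1-\theta)\to 0$ as $\theta\to 1^{-}$; hence there is $\theta_1\in(0,1)$ (close to $1$) with $g(\theta_1)\neq g(\theta_0)$.

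The last step is to realise prescribed fractional parts at arbitrarily large $w$. For a fixed $\theta\in(0,1)$ set $w_n:=(n+\theta)/\log t$, where $n$ runs through the positive integers if $\log t>0$ and through the negative integers if $\log t<0$; then $w_n>0$, $w_n\to+\infty$, $w_n\log t=n+\theta\notin\mathbb{Z}$, and $\lfloor n+\theta\rfloor=n$ gives $\theta_{w_n}=\theta$, so by the first step $(I_{w_n}^{\chi}f)(t)\to f(t-0)+\bigl[\alpha+g(\theta)\bigr]J$. Applying this with $\theta=\theta_0$ and with $\theta=\theta_1$ produces two sequences tending to infinity along which $(I_w^{\chi}f)(t)$ has limits $f(t-0)+[\alpha+g(\theta_0)]J$ and $f(t-0)+[\alpha+g(\theta_1)]J$; since $J\neq 0$ and $g(\theta_0)\neq g(\theta_1)$ these differ, so $(I_w^{\chi}f)(t)$ cannot converge at $t$. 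The one delicate point is precisely this realisation step — the bookkeeping with $\lfloor\,\cdot\,\rfloor$ and the sign of $\log t$, plus the harmless exclusion $t\neq 1$ — everything else being immediate from Lemma~\ref{lemma1} and Theorem~\ref{t1}. (Note that the hypothesis $\psi_{\chi}^{-}\equiv\alpha$ on $(1,e)$ is exactly what suppresses any oscillation coming from the $\psi_{\chi}^{-}(t^{w})$ term, so that the obstruction is carried entirely by $\chi(e^{\theta_w})(1-\theta_w)$, which by the second step genuinely oscillates.)
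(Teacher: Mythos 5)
Your proof is correct and follows essentially the same route as the paper: both reduce the question, via Lemma \ref{lemma1} and Theorem \ref{t1}, to the observation that the coefficient $\chi(e^{\theta})(1-\theta)$ of the jump cannot be constant in $\theta\in(0,1)$, since boundedness of $\chi$ on $[1/e,e]$ forces it to vanish as $\theta\to 1^{-}$ while it is nonzero at some $\theta_{0}$ (the paper phrases this as a contradiction, solving $\chi(\xi)=C/(1-\xi)$, rather than exhibiting two subsequences). Your write-up is in fact slightly more careful than the published one, because you explicitly realise arbitrary fractional parts $\theta_{w}=\theta$ through the sequences $w_{n}=(n+\theta)/\log t$ (handling both signs of $\log t$) and you exclude the degenerate point $t=1$, where $w\log t\in\mathbb{Z}$ for every $w$ and the series actually converges --- two points the paper's proof glosses over.
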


\begin{proof}
Suppose on contrary that $(I_w^{\chi}f)(t)$ converge  point-wise at $t,$ that is $\displaystyle \lim_{\stackrel{w \rightarrow \infty}{w\log(t)\notin \mathbb{Z}}}
(I_w^{\chi}f)(t)=\ell,$ for some $\ell\in\mathbb{R}^{+}.$ Using the existence of the limit and Lemma \ref{lemma1}, we obtain
\begin{eqnarray*}
\ell &=& f(t-0)+\left(\displaystyle \lim_{\stackrel{w \rightarrow \infty}{w\log(t)\notin \mathbb{Z}}}\left[\chi(e^{w\log t-\lfloor w\log t \rfloor} )+\psi_{\chi}^{-}(t^{w})\right]\right)
(f(t+0)-f(t-0)) \\&&-[f(t+0)-f(t-0)] \left(\displaystyle \lim_{\stackrel{w \rightarrow \infty}{w\log(t)\notin \mathbb{Z}}} \chi\left(e^{w\log t-\lfloor w\log t \rfloor} \right) \left(w\log t- \lfloor w\log t \rfloor\right)\right).
\end{eqnarray*}
Since $\psi_{\chi}^{-}(u)=\alpha$ for every $u\in(1,e),$ $\forall w>0, \,\ w\log(t)\notin \mathbb{Z},$ we have
$w\log t- \lfloor w\log t \rfloor=\xi\in(1,e),$ so we obtain
\begin{eqnarray*}
\ell &=& f(t-0)+[f(t+0)-f(t-0)](\alpha+\chi(\xi))-[f(t+0)-f(t-0)]\xi\chi(\xi), \,\ \forall \, \xi\in(1,e).
\end{eqnarray*}
Since $f$ has a jump discontinuity $t\in\mathbb{R}^{+},$ we obtain
\begin{eqnarray*}
\chi(\xi)=\left(\frac{\ell-f(t-0)}{f(t+0)-f(t-0)}-\alpha\right)\left(\frac{1}{1-\xi}\right), \,\ \forall \, \xi\in(1,e).
\end{eqnarray*}
The above expression gives a contradiction. Indeed, if
\begin{eqnarray*}
\frac{\ell-f(t-0)}{f(t+0)-f(t-0)}-\alpha:=C\neq 0,
\end{eqnarray*}
then $\chi(\xi)$ is unbounded on $(1,e),$ so it fails to satisfy condition (i). If $C=0,$ then $\chi(\xi)=0,$ for every $\xi\in(1,e),$ this is again a contradiction. Hence the proof is completed.
\end{proof}

Next we prove a more general theorem on the convergence of the sampling series $I_w^{\chi}f$ for any bounded signal $f$ at jump discontinuities.

\begin{thm}\label{t5}
Let $f:\mathbb{R}^{+}\rightarrow\mathbb{R}$ be a bounded measurable signal and let $t\in\mathbb{R}^{+}$ be a non-removable jump discontinuity point of $f.$ For any $\alpha \in \mathbb{R}$ and any kernel function  $\chi$ satisfying the additional condition $\chi(u)=0,$ for every $u\in[1,e),$ the following are equivalent:
\begin{enumerate}
\item[(i)] $\displaystyle \lim_{w \rightarrow \infty}(I_w^{\chi}f)(t)=\alpha f(t+0)+(1-\alpha)f(t-0),$\\

\item[(ii)] $\psi_{\chi}^{-}(u):=\alpha,$ \hspace{0.5cm} for every $u \in[1, e) $ \\

\item[(iii)]  $\psi_{\chi}^{+}(u):=1-\alpha,$ \hspace{0.5cm} for every $u\in[1, e).$
\end{enumerate}
If we further assume that $\chi$ is continuous on $\mathbb{R}^{+},$ then the above statements are also equivalent to the following:
\begin{enumerate}
\item[(iv)]
\begin{eqnarray*}
\int_{0}^{1} \chi (u) u^{2k\pi i}\frac{du}{u}
=\left\{
\begin{array}{ll}
0 , &  \ \ \ \mbox{if} \ \ \ k \neq 0 \\
\alpha, & \ \ \ \mbox{if} \ \ \ k= 0
\end{array}
\right.
\end{eqnarray*}
\item[(v)]
\begin{eqnarray*}
\int_{1}^{\infty} \chi (u) u^{2k\pi i }\frac{du}{u}
&=&\left\{
\begin{array}{ll}
0 , &  \ \ \ \mbox{if} \ \ \ k \neq 0 \\
1-\alpha, & \ \ \ \mbox{if} \ \ \ k= 0.
\end{array}
\right.
\end{eqnarray*}
\end{enumerate}
\end{thm}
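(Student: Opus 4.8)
The plan is to obtain the equivalence of (i), (ii), (iii) directly from Theorems~\ref{t2} and~\ref{t3}, and then, under the additional continuity hypothesis, to identify (ii) and (iii) with the Mellin--Fourier conditions (iv) and (v) by periodizing the kernel.

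\emph{Step 1 (merging the two regimes).} The assumption ``$\chi(u)=0$ for all $u\in[1,e)$'' splits into $\chi(u)=0$ on $(1,e)$ --- the standing hypothesis of Theorem~\ref{t3} --- together with $\chi(1)=0$, which makes the limiting value in Theorem~\ref{t2} equal to $\psi_\chi^-(1)f(t+0)+(1-\psi_\chi^-(1))f(t-0)$. Since $\lim_{w\to\infty}(I_w^\chi f)(t)$ exists and equals a prescribed value $L$ if and only if both restricted limits --- along $w\log t\in\mathbb Z$ and along $w\log t\notin\mathbb Z$ --- exist and equal $L$, I would take $L=\alpha f(t+0)+(1-\alpha)f(t-0)$ and apply the two theorems to conclude that (i) is equivalent to the conjunction ``$\psi_\chi^-(1)=\alpha$ and $\psi_\chi^-(u)=\alpha$ for all $u\in(1,e)$'', i.e.\ to (ii). For (ii)$\Leftrightarrow$(iii) I would use that, for $u\in[1,e)$, $\psi_\chi^+(u)+\psi_\chi^-(u)=1$: on $(1,e)$ the two sums together exhaust $\sum_{k\in\mathbb Z}\chi(ue^{-k})=1$ by condition (ii) on the kernel, while at $u=1$ the only omitted term is $\chi(1)$, which vanishes by hypothesis.

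\emph{Step 2 (periodization and Fourier coefficients).} Put $g(x):=\psi_\chi^-(e^x)$ and $\widetilde g(x):=\psi_\chi^+(e^x)$. Recurrence of $\psi_\chi^\pm$ with fundamental interval $[1,e]$ makes $g$ and $\widetilde g$ $1$-periodic, and continuity of $\chi$ --- together with the moment hypothesis (iii), which yields uniform convergence of $\sum_k\chi(e^{x-k})$ on compact sets --- makes them continuous. Hence (ii)$\Leftrightarrow$ $g\equiv\alpha$ and (iii)$\Leftrightarrow$ $\widetilde g\equiv 1-\alpha$. By uniqueness of Fourier coefficients of a continuous periodic function, $g\equiv\alpha$ iff $\widehat g(n)=\alpha\,\delta_{n,0}$ for all $n\in\mathbb Z$. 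For $x\in(0,1)$ one has $g(x)=\sum_{k\ge1}\chi(e^{x-k})$; interchanging summation and integration, substituting $y=x-k$, and telescoping the intervals $(-k,1-k)$ over $k\ge1$ into $(-\infty,0)$ gives
$$\widehat g(n)=\int_0^1 g(x)e^{-2\pi i n x}\,dx=\int_{-\infty}^{0}\chi(e^y)e^{-2\pi i n y}\,dy=\int_0^1\chi(u)\,u^{-2\pi i n}\,\frac{du}{u}.$$
Re-indexing $n=-k$ turns the family $\{\widehat g(n)=\alpha\delta_{n,0}\}_{n\in\mathbb Z}$ into exactly (iv), so (ii)$\Leftrightarrow$(iv). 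The same computation for $\widetilde g(x)=\sum_{k\le0}\chi(e^{x-k})$, where the telescoped intervals now fill $(0,\infty)$, yields $\widehat{\widetilde g}(n)=\int_1^\infty\chi(u)u^{-2\pi i n}\frac{du}{u}$ and hence (iii)$\Leftrightarrow$(v); equivalently, (iv)$\Leftrightarrow$(v) follows by adding the two halves and using $\int_0^\infty\chi(u)u^{2k\pi i}\frac{du}{u}=\delta_{k,0}$, which is condition (ii) on the kernel written via periodization.

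\emph{Main obstacle.} Step~1 is mostly bookkeeping about limits along the two arithmetic families; the points needing care are the clean statement that existence of the unrestricted limit is equivalent to existence and agreement of the two restricted limits, and keeping track of the endpoint $u=1$ (it enters via $\chi(1)$ in the integer regime and via the identification $[1,e)\leftrightarrow[0,1)$ in the periodic reformulation). The real work is the analytic justification in Step~2: uniform convergence (hence continuity) of the series defining $\psi_\chi^\pm$, the Fubini interchange of $\sum_k$ with $\int_0^1$, and the telescoping $\sum_{k\ge1}\int_{-k}^{1-k}=\int_{-\infty}^0$, all of which rest on absolute convergence extracted from the $L^1$-ness and moment bounds on $\chi$ (in practice requiring the moment order $\nu$ to be large enough that $|\chi(e^{x-k})|$ is summable in $k$). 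This is where I expect the proof to spend most of its effort.
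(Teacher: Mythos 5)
Your argument is correct and follows essentially the same route as the paper: the equivalence of (i)--(iii) is obtained by combining the restricted-limit results of Theorems \ref{t2} and \ref{t3} (using $\chi(1)=0$ to merge the integer and non-integer regimes), and (iv)--(v) come from identifying the Mellin--Fourier coefficients of the periodization of $\psi_{\chi}^{\pm}$. Your substitution $u=e^{x}$ together with uniqueness of Fourier coefficients of a continuous periodic function is precisely the Mellin--Poisson summation device the paper applies to $\chi_{0}=\chi\cdot\mathbf{1}_{(0,1)}$, only derived by hand; the endpoint bookkeeping at $u=1$ and the Fubini/uniform-convergence justifications you flag are exactly the details the paper leaves implicit.
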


\begin{proof}
Proceeding along the lines proof of Theorem \ref{t2} and Theorem \ref{t3} we see that $(i), (ii)$ and $(iii)$ are equivalent. Now we assume that $\chi$ is continuous on $\mathbb{R}^{+}.$ Define
\begin{eqnarray*}
\chi_0(u)
&:=&\left\{
\begin{array}{ll}
\chi (u) , &  \ \ \ \mbox{for} \ \ \ u<1 \\
0, & \ \ \ \mbox{for} \ \ \ u\geq 1.
\end{array}
\right.
\end{eqnarray*}
Then, we have
\begin{eqnarray*}
\psi_{\chi}^{-}(u)=\sum _{k>\log u}\chi(ue^{-k})=\sum _{k\in\mathbb{Z}}\chi_0(ue^{-k})
\end{eqnarray*}
is a recurrent continuous function with the fundamental interval $[1,e].$ By the Mellin Poisson's summation formula, we get
\begin{eqnarray*}
 \psi_{\chi}^{-}(u)=\sum_{k= - \infty}^{+\infty}\widehat{[\chi_0]_{M}}(2k \pi i) \ u^{-2 k \pi i}
 =\sum_{k= - \infty}^{+\infty}\left(\int_{0}^{1} \chi (u) u^{2k\pi i}\frac{du}{u}\right)u^{-2 k \pi i}.
\end{eqnarray*}
Therefore, we obtain
\begin{eqnarray*}
 \psi_{\chi}^{-}(u)&=&\alpha, \forall u\in[1,e)\\
&\Longleftrightarrow & \widehat{[\chi]_{M}}(2k \pi i)
=\left\{
\begin{array}{ll}
0 , &  \ \ \ \mbox{if} \ \ \ k \neq 0 \\
\alpha, & \ \ \ \mbox{if} \ \ \ k= 0
\end{array}
\right.\\
&\Longleftrightarrow &
\int_{0}^{1} \chi (u) u^{2k\pi i}\frac{du}{u}
=\left\{
\begin{array}{ll}
0 , &  \ \ \ \mbox{if} \ \ \ k \neq 0 \\
\alpha, & \ \ \ \mbox{if} \ \ \ k= 0.
\end{array}
\right.
\end{eqnarray*}
Hence, we have $(ii) \Longleftrightarrow  (iv).$ Using the condition
\begin{eqnarray*}
\sum_{k=- \infty}^{+\infty} \chi(e^{-k}u) =1
\Longleftrightarrow
\widehat{[\chi]_{M}}(2k \pi i)
=\left\{
\begin{array}{ll}
0 , &  \ \ \ \mbox{if} \ \ \ k \neq 0 \\
1, & \ \ \ \mbox{if} \ \ \ k= 0
\end{array}
\right.\\
\end{eqnarray*}
the equivalence between $(iv)$ and $(v)$ can be established easily.
\end{proof}

\begin{remark}
If $f:\mathbb{R}^{+}\rightarrow\mathbb{R}$ be a bounded measurable signal with a removable discontinuity $t\in\mathbb{R}^{+}$, then we have
\begin{eqnarray*}
\displaystyle \lim_{\stackrel{w \rightarrow \infty}{w\log(t)\in \mathbb{Z}}} (I_w^{\chi}f)(t)=\displaystyle \lim_{\stackrel{w \rightarrow \infty}{w\log(t)\notin \mathbb{Z}}} (I_w^{\chi}f)(t)=\displaystyle \lim_{w \rightarrow \infty}(I_w^{\chi}f)(t)=\ell.
\end{eqnarray*}
\end{remark}

\section{Degree of Approximation}
We study the degree of approximation of $I_{w}^{\chi}f$ by using the logarithmic modulus of continuity. A function $f: \mathbb{R}^+ \rightarrow \mathbb{R}$ is said to be
log-uniformly continuous if for every $\epsilon > 0,$ $\exists\,\ \delta > 0$ such that $|f(s) -f(t)| < \epsilon$ whenever $| \log s - \log t| < \delta,$ for any $s, t \in \mathbb{R}^{+}.$ The set of all bounded log-uniformly continuous functions is $\mathcal{C} (\mathbb{R}^+).$  It is easy to see that $\mathcal{C} (\mathbb{R}^+)$ is contained in $C(\mathbb{R}^+),$ where $C(\mathbb{R}^+)$ is the set of all bounded continuous functions on $\mathbb{R}^+$ with usual sup norm $\|f\|_{\infty} := \sup_{x \in \mathbb{R}^+} |f(x)|.$ For $f\in C(\mathbb{R}^{+}),$ the logarithmic modulus of continuity is defined by
$$ \omega(f,\delta):= \sup \{|f(s)-f(t)|:\  \mbox{whenever} \  |\log (s)-\log (t)| \leq \delta,\  \ \delta \in \mathbb{R}^{+}\} .$$
$\omega(f,\delta)$ has the following properties:
\begin{itemize}
\item[(a)] $\omega(f, \delta) \rightarrow 0,$ as $\delta\rightarrow 0.$

\item[(b)] $\displaystyle |f(s)-f(t)|\leq  \omega(f,\delta)\left(1+\dfrac{|\log s-\log t|}{\delta}\right).$
\end{itemize}

Now we show the order of convergence for the Kantorovich exponential sampling series when $M_{\nu}(\chi)<+\infty$ for $0<\nu<1.$

\begin{thm} \label{t7}
Let $\chi$ be a kernel such that $M_{\nu}(\chi)<+\infty$ for $0<\nu<1.$ Then for any $ f \in \mathcal{C} (\mathbb{R}^+)$ and for sufficiently large $w>0,$ we have
 $$ |(I_{w}^{\chi}f)(t) - f(t)| \leq  \omega (f,w^{-\nu})[M_{\nu}(\chi)+2M_{0}(\chi)]+2^{\nu+1}\|f\|_{\infty}M_{\nu}(\chi)w^{-\nu},$$
for every $t\in\mathbb{R}^{+}.$
\end{thm}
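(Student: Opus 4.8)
The plan is to estimate $(I_{w}^{\chi}f)(t)-f(t)$ by first rewriting $f(t)$ using the partition-of-unity property (condition (ii)) of the kernel. Since $\sum_{k\in\mathbb{Z}}\chi(e^{-k}t^w)=1$, we have
\begin{eqnarray*}
(I_{w}^{\chi}f)(t)-f(t)=\sum_{k=-\infty}^{+\infty}\chi(e^{-k}t^{w})\, w\int_{k/w}^{(k+1)/w}\bigl(f(e^{u})-f(t)\bigr)\,du.
\end{eqnarray*}
Taking absolute values and pulling it inside, I would bound $|f(e^u)-f(t)|$ for $u\in[k/w,(k+1)/w]$ using property (b) of the logarithmic modulus of continuity, noting that $|\log e^u-\log t|=|u-\log t|$. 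The key inequality is a triangle estimate: for $u$ in that interval, $|u-\log t|\le |k/w-\log t|+1/w$, and since $\log t=(\log t^w)/w$, we can write $|k/w-\log t|=w^{-1}|k-\log t^{w}|$. So $|u-\log t|\le w^{-1}\bigl(|k-\log t^w|+1\bigr)$.

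Next I would choose $\delta=w^{-\nu}$ in property (b) and split the resulting sum. After the estimates above,
\begin{eqnarray*}
|(I_{w}^{\chi}f)(t)-f(t)|\le \omega(f,w^{-\nu})\sum_{k}|\chi(e^{-k}t^{w})|\Bigl(1+w^{\nu-1}\bigl(|k-\log t^{w}|+1\bigr)\Bigr).
\end{eqnarray*}
Here $\sum_k|\chi(e^{-k}t^w)|\le M_0(\chi)$ and $\sum_k|\chi(e^{-k}t^w)|\,|k-\log t^w|\le M_1(\chi)$; but we only have control of $M_\nu$ for $0<\nu<1$, not $M_1$. The standard trick is to split $|k-\log t^w|$ according to whether it is $\le 1$ or $>1$: when $|k-\log t^w|\le 1$ we bound it by $1$ (absorbed into the $M_0$ term), and when $|k-\log t^w|>1$ we use $|k-\log t^w|\le|k-\log t^w|^{\nu}$ is \emph{false}, so instead one bounds $|k-\log t^w|$ crudely or — better — one keeps only the terms where the whole factor $w^{\nu-1}(|k-\log t^w|+1)$ can be dominated. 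Actually the cleaner route: for the terms with $|k-\log t^w|\le w$ we have $w^{\nu-1}|k-\log t^w|\le w^{\nu-1}\cdot w^{1-\nu}|k-\log t^w|^{\nu}=|k-\log t^w|^\nu$, giving a contribution $\le \omega(f,w^{-\nu})M_\nu(\chi)$; for the terms with $|k-\log t^w|> w$ one abandons the modulus-of-continuity bound and instead estimates $|f(e^u)-f(t)|\le 2\|f\|_\infty$, and on that range $1< w^{-\nu}|k-\log t^w|^{\nu}$ so $\sum_{|k-\log t^w|>w}|\chi(e^{-k}t^w)|\le w^{-\nu}\sum_k|\chi|\,|k-\log t^w|^\nu\le w^{-\nu}M_\nu(\chi)$, and the factor $2\|f\|_\infty$ combined with the extra $+1$'s and the $2^\nu$ from a $|a+b|^\nu\le 2^\nu(|a|^\nu+|b|^\nu)$ step produces the claimed $2^{\nu+1}\|f\|_\infty M_\nu(\chi)w^{-\nu}$.

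The remaining bookkeeping is to collect the pieces: the $\omega(f,w^{-\nu})$ term picks up $M_\nu(\chi)$ from the bounded range, plus $M_0(\chi)$ from the ``$1$'' in property (b), plus another $M_0(\chi)$ from absorbing the $|k-\log t^w|\le 1$ terms and the stray $+1$, yielding the coefficient $M_\nu(\chi)+2M_0(\chi)$; the ``sufficiently large $w$'' hypothesis is needed so that $w^{\nu-1}\le 1$ (since $\nu<1$) and so that $w\ge 1$, which is what lets us replace $1$ by $w^{1-\nu}|k-\log t^w|^\nu$ only where $|k-\log t^w|$ is comparable to $w$. The main obstacle — and the only genuinely delicate point — is precisely this splitting at the threshold $|k-\log t^w|\approx w$ and choosing the comparisons so that each regime is controlled by $M_\nu(\chi)$ alone (never $M_1$); once the split is set up, everything else is routine summation against the definition of $M_\nu(\chi,u)$ and its supremum $M_\nu(\chi)$.
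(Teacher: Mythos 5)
Your proposal is correct and follows essentially the same route as the paper: expand via the partition of unity, apply property (b) of the logarithmic modulus with $\delta=w^{-\nu}$ on the near range and the crude bound $2\|f\|_{\infty}$ on the far range, and control both regimes by $M_{0}(\chi)$ and $M_{\nu}(\chi)$. The only differences are cosmetic — you split at $|k-\log t^{w}|\le w$ rather than the paper's $\le w/2$ (which is why your far-range constant is actually $2$ rather than $2^{\nu+1}$, still within the claimed bound), and you compare $w^{\nu-1}|k-\log t^{w}|$ with $|k-\log t^{w}|^{\nu}$ directly instead of first passing from $\omega(f,|u-\log t|)$ to $\omega(f,|u-\log t|^{\nu})$ as the paper does.
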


\begin{proof}
Let $t\in\mathbb{R}^{+}$ be fixed. Then using the condition $\displaystyle\sum_{k= - \infty}^{+\infty}  \chi(e^{-k} t^w) =1,$ we obtain
\begin{eqnarray*}
 |(I_{w}^{\chi}f)(t) - f(t)| &=& \left| \sum_{k= - \infty}^{+\infty} \chi(e^{-k} t^{w})w\int_{\frac{k}{w}}^{\frac{k+1}{w}}(f(e^{u}) - f(t))du \right| \\
&\leq& \left( \sum_{\big|k-w \log t\big|\leq \frac{w}{2}} +\sum_{\big|k- w\log t\big| > \frac{w }{2}} \right) \big| \chi(e^{-k} t^{w})\big| w\int_{\frac{k}{w}}^{\frac{k+1}{w}}|f(e^{u}) - f(t)|du \\
&:=& I_{1}+I_{2}.
\end{eqnarray*}
Let $u\in \left[\frac{k}{w}, \frac{k+1}{w}\right].$ If $| w\log t-k|\leq \frac{w}{2}$, we have
$|u-\log t|\leq\left|u-\frac{k}{w}\right|+\left|\frac{k}{w}-\log t\right|\leq \frac{1}{w}+\frac{1}{2}\leq 1.$ Let $0<\nu<1$ and for every $w>0$ sufficiently large,  we have
$$\omega\left(f,\left |u- \log t \right|\right)\leq\omega\left(f,\left|u - \log t \right|^{\nu}\right). $$ Therefore, using the above inequality and the property $(b),$ we obtain
\begin{eqnarray*}
I_{1}&\leq&\sum_{\left|k-w \log t\right|\leq \frac{w }{2}}\left| \chi(e^{-k} t^{w})\right| w\int_{\frac{k}{w}}^{\frac{k+1}{w}}\omega\left(f,\left |u- \log t \right|^{\nu}\right)du\\
&\leq&\sum_{\left|k-w \log t\right|\leq\frac{w }{2}}\left| \chi(e^{-k} t^{w})\right| w \int_{\frac{k}{w}}^{\frac{k+1}{w}}\left(1+w^{\nu}\left |u- \log t \right|^{\nu}\right)\omega(f,w^{-\nu})du\\
&\leq&\omega(f,w^{-\nu})\left(\sum_{\left|k-w \log t\right|\leq \frac{w }{2}}\left| \chi(e^{-k} t^{w})\right| w\int_{\frac{k}{w}}^{\frac{k+1}{w}}w^{\nu}\left |u- w\log t \right|^{\nu}du+\sum_{\left|k-w \log t\right|\leq\frac{w }{2}}\left| \chi(e^{-k} t^{w})\right| \right)\\
&=:&I_1^{'}+I_{2}^{''}.
\end{eqnarray*}
Using the condition $(ii)$ we easily obtain $I_{2}^{''}\leq m_{0}(\chi).$ Utilizing the sub-additivity of $|.|^{\nu}$ for $0<\nu<1$, we have:
\begin{eqnarray*}
I_1^{'}&\leq&\sum_{\left|k-w \log t\right|\leq \frac{w}{2}}\left| \chi(e^{-k} t^{w})\right| \left (w^{\nu}  \max_{u\in\left[\frac{k}{w}, \frac{k+1}{w}\right]}\left |u- \log t \right|^{\nu}\right)\\
&\leq&\sum_{\left|k-w \log t\right|\leq \frac{w}{2}}\left| \chi(e^{-k} t^{w})\right| w^{\nu}\left(  \max\left(\left |\frac{k}{w}- \log t \right|^{\nu},\left |\frac{k+1}{w}- \log t \right|^{\nu}\right)\right)\\
&\leq&\sum_{\left|k-w \log t\right|\leq \frac{w}{2}}\left| \chi(e^{-k} t^{w})\right| \left(w^{\nu} \max\left(\left |\frac{k}{w}- \log t \right|^{\nu},\left |\frac{k}{w}- \log t \right|^{\nu}+\frac{1}{w^{\nu}}\right)\right)\\
&\leq&\sum_{\left|k-w \log t\right|\leq\frac{w}{2}}\left| \chi(e^{-k} t^{w})\right| \left(w^{\nu}  \left(\left|\frac{k}{w}- \log t \right|^{\nu}+\frac{1}{w^{\nu}}\right)\right)\\
&\leq& M_{\nu}(\chi)+M_{0}(\chi)<+\infty.
\end{eqnarray*}
Finally we estimate $I_{2}$:
\begin{eqnarray*}
I_{2} &\leq&2\|f\|_{\infty} \sum_{\big|k- w\log t\big| > \frac{w}{2}}  \big| \chi(e^{-k} t^{w})\big|\leq
2\|f\|_{\infty} \sum_{\big|k- w\log t\big| > \frac{w }{2}} \frac{\big|k- w\log t\big|^{\nu} }{\big|k- w\log t\big| ^{\nu}} \big| \chi(e^{-k} t^{w})\big|\\
&\leq&2^{\nu+1}\|f\|_{\infty}w^{-\nu}\sum_{\big|k- w\log t\big| > \frac{w }{2}} \big| \chi(e^{-k} t^{w})\big|\big|k- w\log t\big|^{\nu} \leq
2^{\nu+1}\|f\|_{\infty}w^{-\nu}M_{\nu}(\chi)<+\infty.
\end{eqnarray*}
On combining the estimates $I_{1}$ and $I_{2},$ we get the desired result.
\end{proof}

\section{Linear Prediction of Signals}
The Kantorovich exponential sampling series can be used for predicting the signals at a time $t$ using the sample values taken in the past. This is addressed in the following theorem.

\begin{thm}
Let $\chi$ be a kernel with compact support and $f:\mathbb{R}^{+}\rightarrow \mathbb{R}$ be a bounded measurable signals and $w>0.$ If $supp\, \chi \subset (1,\infty),$ then for every $w>0$ and $t\in\mathbb{R}^{+}$ with $w\log t \in \mathbb{Z},$ we have
$$ (I_{w}^{\chi}f)(t)= \sum_{k<w\log t} \chi(e^{-k}t^{w})w\int_{\frac{k}{w}}^{\frac{k+1}{w}}f(e^{u})du.$$
Further, if $supp\, \chi \subset (e,\infty),$ then for every $w>0$ and $t\in\mathbb{R}^{+}$ with $w\log t \notin \mathbb{Z},$ there holds
$$(I_{w}^{\chi}f)(t)=\sum_{k\leq \lfloor w\log t\rfloor -1}  \chi(e^{-k}t^{w})w\int_{\frac{k}{w}}^{\frac{k+1}{w}}f(e^{u})du.$$
\end{thm}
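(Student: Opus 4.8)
The plan is to read off both identities directly from the defining series $(\ref{main})$ by determining which summands are annihilated by the support hypothesis on $\chi$. Setting $x=t$ in $(\ref{main})$, the $k$-th term carries the factor $\chi(e^{-k}t^{w})=\chi(e^{\,w\log t-k})$ together with the average $w\int_{k/w}^{(k+1)/w}f(e^{u})\,du$, which is finite since $f$ is bounded and measurable; moreover, because $\chi$ has compact support, only finitely many $k$ give a nonzero factor, so the series is in fact a finite sum and no convergence issue arises. Thus it suffices to show that $\chi(e^{\,w\log t-k})$ vanishes for exactly the indices $k$ that the claimed formulas omit.

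For the first statement, suppose $supp\,\chi\subset(1,\infty)$ and $w\log t\in\mathbb{Z}$. If $k$ is an integer with $k\ge w\log t$, then $w\log t-k\le 0$, hence $e^{\,w\log t-k}\le 1$, so $e^{\,w\log t-k}\notin(1,\infty)$ and therefore $\chi(e^{-k}t^{w})=0$. Consequently only the indices $k<w\log t$ contribute to $(\ref{main})$, which is precisely the asserted identity.

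For the second statement, suppose $supp\,\chi\subset(e,\infty)$ and $w\log t\notin\mathbb{Z}$. For an integer $k$ one has $e^{\,w\log t-k}\le e$ if and only if $w\log t-k\le 1$, i.e. $k\ge w\log t-1$; since $w\log t$ is not an integer, this is equivalent to $k\ge\lceil w\log t\rceil-1=\lfloor w\log t\rfloor$. Hence for every integer $k\ge\lfloor w\log t\rfloor$ we get $e^{\,w\log t-k}\le e$, so $e^{\,w\log t-k}\notin(e,\infty)$ and $\chi(e^{-k}t^{w})=0$. Therefore in $(\ref{main})$ only the indices $k\le\lfloor w\log t\rfloor-1$ survive, which is the stated formula.

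The argument is elementary and I do not anticipate a genuine obstacle; the only point needing care is the step in the non-integer case where the real inequality $k\ge w\log t-1$ is converted into the integer bound $k\le\lfloor w\log t\rfloor-1$, which relies on $w\log t\notin\mathbb{Z}$ (so that $\lceil w\log t\rceil=\lfloor w\log t\rfloor+1$) and is the natural place for an off-by-one slip. To make the prediction interpretation explicit one may add that for the surviving indices the averaging interval $[e^{k/w},e^{(k+1)/w}]$ lies in $(0,t]$ in the first case (since $k+1\le w\log t$) and strictly to the left of $t$ in the second case (since $k+1\le\lfloor w\log t\rfloor<w\log t$), so $(I_{w}^{\chi}f)(t)$ is reconstructed entirely from local averages of $f$ taken in the past.
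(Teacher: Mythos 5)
Your proposal is correct and follows essentially the same route as the paper: substitute $x=t$ in the defining series, write the $k$-th coefficient as $\chi(e^{\,w\log t-k})$, and use the support hypothesis to kill the terms with $k\ge w\log t$ (first case) and with $k\ge\lfloor w\log t\rfloor$ (second case, where the boundary index $k=\lfloor w\log t\rfloor$ gives an argument in $(1,e)$, hence outside $(e,\infty)$). Your handling of the integer/ceiling bookkeeping in the non-integer case is in fact slightly more explicit than the paper's.
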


\begin{proof}
Let $w>0.$ As $supp\,\chi$ $\subset (1,\infty),$ we obtain $ \chi(e^{-k}t^{w})=0,$ for every $k\in\mathbb{Z}$ such that $e^{-k}t^{w}\leq 1,$ this implies that $\log t \leq \frac{k}{w}.$ If $w \log t \in \mathbb{Z}$, it is easy to observe that the last term of the Kantorovich exponential sampling series is
\begin{eqnarray*}
\chi(e)\left(w\int_{\log t-\frac{1}{w}}^{\log t}f(e^{u})du\right).
\end{eqnarray*}
Therefore, we see that Kantorovich exponential sampling series exploit the values of the signal $f$ in the past with respect to fixed time $t.$ \\
\indent Now the second case can be estimated as follows: Let $supp\, \chi \subset (e,\infty).$ If $w\log t \notin \mathbb{Z}$ then $\lfloor w\log t\rfloor <w\log t<\lfloor w\log t\rfloor+1$. Finally, we have
\begin{eqnarray*}
 \sum_{k<w\log t} \chi(e^{-k}t^{w})w\int_{\frac{k}{w}}^{\frac{k+1}{w}}f(e^{u})du&=&\sum_{k\leq \lfloor w\log t\rfloor -1}\chi(e^{-k}t^{w})w\int_{\frac{k}{w}}^{\frac{k+1}{w}}f(e^{u})du
\\&& +\chi(e^{w\log t-\lfloor w\log t\rfloor})\left(w\int_{\frac{\lfloor w\log t\rfloor}{w}}^{\frac{\lfloor w\log t\rfloor+1}{w}}f(e^{u})du\right).
\end{eqnarray*}
Since $\chi(e^{w\log t-\lfloor w\log t\rfloor})=0$ if $0<w\log t-\lfloor w\log t\rfloor <1,$ we get
$$(I_{w}^{\chi}f)(t)=\sum_{k\leq \lfloor w\log t\rfloor -1}  \chi(e^{-k}t^{w})w\int_{\frac{k}{w}}^{\frac{k+1}{w}}f(e^{u})du.$$ Hence the proof is completed.
\end{proof}

\begin{remark}
It is observed that in order to approximate the signal at a time $t,$ the local average values of the signals are computed before the time instance $t.$ The above theorem can be used for predicting the future signal values from the past value of the signal.
\end{remark}

\section{Examples of the Kernels and Special Cases}
\subsection{Mellin-B spline kernels}
The Mellin B-splines of order $n$  for $ x \in \mathbb{R}^{+}$ are given by
$$\bar{B}_{n}(x):= \frac{1}{(n-1)!} \sum_{j=0}^{n} (-1)^{j} {n \choose j} \bigg( \frac{n}{2}+\log x-j \bigg)_{+}^{n+1}.$$
$\bar{B}_{n}(x) $ is compactly supported for every $ n \in \mathbb{N}.$ So it is bounded and it belongs to $L^{1}(\mathbb{R}^{+}).$ Therefore condition (i) is satisfied. The Mellin transform of $\bar{B}_{n}$ (see \cite{bardaro7}) is
\begin{eqnarray*}
\widehat{[\bar{B}_{n}]_{M}}(c+it) = \bigg( \frac{\sin(\frac{t}{2})}{(\frac{t}{2})} \Bigg)^{n},  \ \ \hspace{0.5cm} t \neq 0.
\end{eqnarray*}
Using the Mellin's-Poisson summation formula (see \cite{bardaro7}), it is easy to see that $\bar{B}_{n}(x)$ satisfies the condition (ii). The condition (iii) is also easily verified.

\subsection{Mellin Jackson kernels}
For $y\in\mathbb{R}^{+}, \beta\in\mathbb{N}, \gamma\geq 1,$ the Mellin Jackson kernels are defined by
$$J_{\gamma, \beta}^{-1}(y):=d_{\gamma, \beta}\,\ y^{-c} sinc^{2\beta}\left(\frac{\log y}{2\gamma\beta\pi}\right),$$
where
$$d_{\gamma, \beta}^{-1}:=\int_{0}^{\infty}sinc^{2\beta}\left(\frac{\log y}{2\gamma\beta\pi}\right)\frac{du}{u}.$$ The Mellin Jackson kernels also satisfy the conditions (i),(ii) and (iii) (see \cite{bardaro7}). We can study the convergence of the $I_{w}^{\chi}f$ with jump discontinuity associated with these kernels only for the case given in Theorem \ref{t2} and we observe that $\chi(u)\neq 0,$ for every $u\in (1,e)$. Hence, Theorem \ref{t3} and Theorem \ref{t5} can not be applied for these kernels. In order to find examples of the kernels for which the exponential sampling series converge at any jump discontinuity $t\in\mathbb{R}^{+}$ of the given bounded signal $f:\mathbb{R}^{+}\rightarrow \mathbb{R}$, we need to construct suitable kernels. One such construction is given below.

\begin{thm}\label{t11}
Let $a,b\in\mathbb{R}^{+}$ and let $\chi_{a},\chi_{b}$ be two continuous kernels satisfying conditions (i), (ii) and (iii) such that $supp\, \chi_{a}\subseteq {[e^{-a},e^{a}]}$ and $supp\, \chi_{b}\subseteq {[e^{-b},e^{b}]}.$ For a fixed $\alpha\in\mathbb{R},$ define $\chi:\mathbb{R}^{+}\rightarrow \mathbb{R}$ by
$$\chi{(u)}:=(1-\alpha)\chi_{a}(ue^{-a-1})+\alpha\chi_{b}(ue^{b}), \,\,\ u\in\mathbb{R}^{+}.$$
Then $\chi$ also satisfies conditions (i), (ii) and (iii) and in addition $\chi(u)=0,$ for every $u\in[1,e).$ Further, the Kantorovich exponential sampling series $I_w^{\chi}{f}, w>0$ corresponding to $\chi$ satisfying (i) of Theorem \ref{t5} at any jump discontinuity $t\in\mathbb{R}^{+}$ of the given bounded measurable signal $f$ with $\alpha$ as a parameter.
\end{thm}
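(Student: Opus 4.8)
\noindent\textit{Proof plan.} The plan is to verify, in order, that (a) $\chi$ is a kernel in the sense of (i)--(iii), (b) $\chi(u)=0$ for every $u\in[1,e)$, and (c) $\psi_{\chi}^{-}(u)=\alpha$ for every $u\in[1,e)$; once (a), (b), (c) hold, Theorem \ref{t5} applies to $\chi$ and its statement (ii) is satisfied, hence its statement (i) holds, which is exactly the asserted behaviour of $I_{w}^{\chi}f$ at a jump discontinuity. The whole argument rests on one observation about supports: the dilation factors $e^{-a-1}$ and $e^{b}$ are chosen so that $u\mapsto\chi_{a}(ue^{-a-1})$ is supported in $[e^{a+1}e^{-a},e^{a+1}e^{a}]=[e,e^{2a+1}]$ while $u\mapsto\chi_{b}(ue^{b})$ is supported in $[e^{-b}e^{-b},e^{b}e^{-b}]=[e^{-2b},1]$.

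For (b): these two intervals are disjoint except for the single point $u=1$, and there the first summand is $0$ since $e^{-a-1}<e^{-a}$, while the second summand equals $\chi_{b}(e^{b})$, which is $0$ because $\chi_{b}$ is continuous and $supp\,\chi_{b}\subseteq[e^{-b},e^{b}]$ forces $\chi_{b}$ to vanish at the endpoint $e^{b}$. Hence $\chi\equiv 0$ on $[1,e)$. For (a): $\chi$ is continuous, being a linear combination of continuous functions; it lies in $L^{1}(\mathbb{R}^{+})$ and is bounded on $[1/e,e]$ by the change of variables $v=ue^{-a-1}$ (resp.\ $v=ue^{b}$) together with the boundedness of the compactly supported continuous functions $\chi_{a},\chi_{b}$; condition (ii) follows because, applying (ii) for $\chi_{a}$ at the point $ue^{-a-1}$ and for $\chi_{b}$ at the point $ue^{b}$, we get $\sum_{k}\chi(e^{-k}u)=(1-\alpha)\cdot 1+\alpha\cdot 1=1$; and condition (iii) follows from the elementary bound $|k-\log u|^{\nu}\leq 2^{\nu}\big(|k-\log v|^{\nu}+c^{\nu}\big)$, where $|\log u-\log v|=c\in\{a+1,b\}$, which dominates $M_{\nu}(\chi)$ by a finite combination of $M_{\nu}(\chi_{a}),M_{0}(\chi_{a}),M_{\nu}(\chi_{b}),M_{0}(\chi_{b})$, all finite (indeed $M_{\nu}$ of a compactly supported continuous function is finite for every $\nu\geq 0$).

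For (c): fix $u\in[1,e)$, so that $\log u\in[0,1)$, and split $\psi_{\chi}^{-}(u)=\sum_{k>\log u}\chi(ue^{-k})$ into its $\chi_{a}$-part and its $\chi_{b}$-part. A term $\chi_{a}(e^{-k}ue^{-a-1})$ can be nonzero only when $e^{-k}ue^{-a-1}\in[e^{-a},e^{a}]$, i.e.\ when $k\leq\log u-1$; since $\log u-1<\log u$, no such index satisfies $k>\log u$, so the $\chi_{a}$-part vanishes. A term $\chi_{b}(e^{-k}ue^{b})$ can be nonzero only when $k\geq\log u$; for $u\in(1,e)$ the number $\log u$ is not an integer, so an integer $k\geq\log u$ automatically satisfies $k>\log u$, while for $u=1$ the only borderline index $k=0$ contributes $\chi_{b}(e^{b})=0$. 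Hence $\sum_{k>\log u}\chi_{b}(e^{-k}ue^{b})=\sum_{k\in\mathbb{Z}}\chi_{b}(e^{-k}ue^{b})=1$ by (ii) for $\chi_{b}$, and therefore $\psi_{\chi}^{-}(u)=(1-\alpha)\cdot 0+\alpha\cdot 1=\alpha$. Theorem \ref{t5} then yields $\lim_{w\to\infty}(I_{w}^{\chi}f)(t)=\alpha f(t+0)+(1-\alpha)f(t-0)$ at every jump discontinuity $t$ of $f$, which completes the proof.

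The step I expect to require the most care is the borderline-index bookkeeping in (c): one must ensure that the one-sided truncation defining $\psi_{\chi}^{-}$ annihilates one of the two translated families completely while keeping the other one intact, and this is precisely what dictates the choice of the factors $e^{-a-1}$ (which places the translate of $\chi_{a}$ entirely in $[e,\infty)$, strictly to the right of the window $[1,e)$) and $e^{b}$ (which places the translate of $\chi_{b}$ entirely in $(0,1]$, to the left of the window). The one genuinely delicate point inside that analysis is the shared endpoint $u=1$, equivalently the index $k=\log u=0$, which is handled by invoking continuity of $\chi_{b}$ together with its support bound to get $\chi_{b}(e^{b})=0$. Everything else --- the kernel axioms (i)--(iii) for $\chi$ --- is routine verification.
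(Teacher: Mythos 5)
Your proof is correct, but it takes a genuinely different route from the paper's. The paper works almost entirely on the Mellin-transform side: it computes $\widehat{[\chi]_{M}}(s)=(1-\alpha)\widehat{[\chi_{a}]_{M}}(s)e^{(1+a)s}+\alpha\widehat{[\chi_{b}]_{M}}(s)e^{-bs}$, deduces the partition-of-unity condition (ii) from $\widehat{[\chi]_{M}}(2k\pi i)$ via Mellin--Poisson summation, merely asserts that $\chi$ vanishes on $[1,e)$, and then verifies condition \emph{(v)} of Theorem \ref{t5} (the moment condition $\int_{1}^{\infty}\chi(u)u^{2k\pi i}\,du/u=(1-\alpha)\delta_{k,0}$) by the same substitution, invoking the continuity of $\chi$ needed for the (iv)/(v) equivalence. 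You instead verify condition \emph{(ii)} of Theorem \ref{t5} directly: you make the support computation explicit (the $\chi_{a}$-translate lives in $[e,e^{2a+1}]$, the $\chi_{b}$-translate in $[e^{-2b},1]$), resum the two dilated families to get $\sum_{k}\chi(e^{-k}u)=1$ without Poisson summation, and track which indices survive the one-sided sum defining $\psi_{\chi}^{-}$ to get $\psi_{\chi}^{-}(u)=\alpha$ on $[1,e)$. Your route is more elementary and self-contained, and it surfaces the one genuinely delicate point that the paper glosses over: the shared endpoint $u=1$ (index $k=0$), where one must argue $\chi_{b}(e^{b})=0$ from continuity together with $\operatorname{supp}\chi_{b}\subseteq[e^{-b},e^{b}]$ --- without that observation the claim $\chi(1)=0$, and hence the hypothesis $\chi\equiv 0$ on $[1,e)$ of Theorem \ref{t5}, would fail. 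What the paper's transform-based route buys is a direct link to the Mellin characterizations (iv)--(v), at the cost of an extra continuity hypothesis and a slightly opaque ``for suitable choice of $a$ and $b$'' step that your direct summation avoids entirely. Both arguments are sound; yours is arguably the cleaner verification of the hypotheses actually needed.
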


\begin{proof}
We estimate the Mellin transform of $\chi(u)$ as follows:
\begin{eqnarray*}
\widehat{[\chi]_{M}}(s)
&=&\int_{0}^{\infty}(1-\alpha)\chi_{a}(te^{-a-1})t^{c+iv-1}dt+\int_{0}^{\infty}\alpha\chi_{b}(te^{b})t^{c+iv-1}dt
:=I_{1}+I_{2}.
\end{eqnarray*}
Making substitution $u=e^{-a-1},\,\ du=e^{-a-1}dt$ in $I_1$ and $u=te^{b}, \,\ du=e^{b}dt$ in $I_2,$ we easily obtain
\begin{eqnarray*}
\widehat{[\chi]_{M}}(s)=(1-\alpha)\widehat{[\chi_{a}]_{M}}(s)e^{(1+a)s}+\alpha\widehat{[\chi_{b}]_{M}}(s)e^{-bs},
\end{eqnarray*}
where $s=c+iv.$ It is easy to check that $\chi$ satisfies conditions (i) and (iii). Now we show that kernel satisfies the condition (ii). We obtain
\begin{eqnarray*}
\widehat{[\chi]_{M}}(2k\pi i )&=&(1-\alpha)\widehat{[\chi_{a}]_{M}} (2k\pi i)e^{(1+a)(2k\pi i)}+\alpha\widehat{[\chi_{b}]_{M}}(2k\pi i)e^{-b(2k \pi i)}.
\end{eqnarray*}
Since $\chi_{a}$ and $\chi_{b}$ satisfies condition (ii), we have
\begin{eqnarray*}
\widehat{[\chi_{a}]_{M}} (2k\pi i )=\widehat{[\chi_{b}]_{M}} (2k\pi i )= \left\{
\begin{array}{ll}
0 , &  \ \ \ \mbox{if} \ \ \ k \neq 0 \\
1, & \ \ \ \mbox{if} \ \ \ k= 0.
\end{array}
\right.
\end{eqnarray*}
For suitable choice of $a$ and $b$, we obtain
\begin{eqnarray*}
\widehat{[\chi]_{M}} (2k\pi i )= \left\{
\begin{array}{ll}
0 , &  \ \ \ \mbox{ if} \ \ \ k \neq 0 \\
1, & \ \ \ \mbox{if} \ \ \ k= 0.
\end{array}
\right.
\end{eqnarray*}
Hence, $\chi$ satisfies condition (ii) and we easily see that $ \chi $ also satisfies $\chi(u)=0$ for $1\leq u<e. $
Now, we obtain
\begin{eqnarray*}
\int_{1}^{\infty} \chi (u) u^{2k\pi i }\frac{du}{u}&=&(1-\alpha)\int_{1}^{\infty} \chi_{a}(ue^{-a-1})u^{2k\pi i}\frac{du}{u}\\
&=& (1-\alpha)e^{2k\pi i(1+a)}\widehat{[\chi_{a}]_{M}}(2k\pi i )\\
&=&\left\{
\begin{array}{ll}
0 , &  \ \ \ \mbox{if} \ \ \ k \neq 0 \\
1-\alpha, & \ \ \ \mbox{if} \ \ \ k= 0.
\end{array}
\right.
\end{eqnarray*}
Therefore, the condition (v) of Theorem is satisfied, hence the proof is completed.
\end{proof}

As a consequence of the following theorem, condition on the kernels can be given so that $ M_{\nu}(\chi)<+\infty$ for some $0\leq \nu<q,$ for some $q<1$ and
 $M_{\nu}(\chi)=+\infty$ for $q<\nu\leq 1.$

\begin{thm}\label{t12}
Let $\chi : \mathbb{R}^{+} \rightarrow \mathbb{R}$ be a function such that
\begin{eqnarray*}
\frac{C_{1}}{|\log u|^{p}} \leq \chi({u})\leq \frac{C_{2}}{|\log u|^{p}}, \hspace{1cm} \forall \,\ |\log u|>M
\end{eqnarray*}
holds for suitable constants $0<C_{1}\leq C_{2}$ and for some $1<p \leq 2,$ and $M >0$. Then, we have
\begin{eqnarray*}
 M_{\nu}(\chi)= \left\{
\begin{array}{ll}
+\infty , &  \ \ p -1\leq \nu \leq 1 \\
<+\infty, & \ \ \ 0\leq \nu <p-1.
\end{array}
\right.
\end{eqnarray*}
\end{thm}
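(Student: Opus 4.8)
The plan is to estimate the quantity $M_\nu(\chi,u) = \sum_{k\in\mathbb{Z}} |\chi(e^{-k}u)|\,|k-\log u|^\nu$ by comparing the tail of the series with an integral. Fix $u\in\mathbb{R}^+$ and write $\xi = \log u$, so that $\log(e^{-k}u) = \xi - k$. Split the sum into the finite block $\{k : |k-\xi| \le M+1\}$, which contributes a bounded amount uniformly in $u$ because $\chi$ is (by hypothesis, implicitly) locally bounded and there are at most $2M+3$ terms, and the tail $\{k : |k-\xi| > M+1\}$, on which the two-sided bound $C_1/|\xi-k|^p \le \chi(e^{-k}u) \le C_2/|\xi-k|^p$ applies. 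Hence up to a uniformly bounded error,
\begin{eqnarray*}
M_\nu(\chi,u) \asymp \sum_{|k-\xi|>M+1} \frac{|k-\xi|^\nu}{|k-\xi|^p} = \sum_{|k-\xi|>M+1} \frac{1}{|k-\xi|^{p-\nu}},
\end{eqnarray*}
where the implied constants depend only on $C_1, C_2$. The point is that this last series is comparable, via the integral test, to $\int_{M+1}^{\infty} r^{-(p-\nu)}\,dr$, which converges precisely when $p-\nu>1$, i.e.\ $\nu < p-1$, and diverges when $p-\nu \le 1$, i.e.\ $\nu \ge p-1$.

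More concretely, I would carry out the following steps. First, observe that for any real $\xi$ the set of integers $k$ with $|k-\xi|>M+1$ can be matched against the integers $m\ge 1$ by $|k-\xi|\in[m,m+2)$ for at most two values of $k$, giving two-sided bounds $\sum_{m\ge 1} c\, m^{-(p-\nu)} \le \sum_{|k-\xi|>M+1}|k-\xi|^{-(p-\nu)} \le \sum_{m\ge 1} C\, m^{-(p-\nu)}$ with constants independent of $\xi$. Second, apply the standard fact that $\sum_{m\ge1} m^{-\beta} < \infty \iff \beta > 1$ with $\beta = p-\nu$. For the convergence case $0\le\nu<p-1$ this shows $\sup_u M_\nu(\chi,u) = M_\nu(\chi) < +\infty$ (the finite block is handled by local boundedness of $\chi$, which is part of condition (i) and is in any case implied by the hypothesis being meaningful). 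Third, for the divergence case $p-1\le\nu\le1$, the lower bound $C_1 \sum_{|k-\xi|>M+1}|k-\xi|^{-(p-\nu)} = +\infty$ already forces $M_\nu(\chi,u)=+\infty$ for every single $u$, so a fortiori $M_\nu(\chi)=+\infty$; note here that $C_1>0$ is exactly what makes the lower bound bite.

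The only mildly delicate point — and the one I would state carefully rather than wave at — is the uniformity in $u$ of the comparison between the series and the integral: because $\xi=\log u$ ranges over all of $\mathbb{R}$, one must check that the constants relating $\sum_{|k-\xi|>M+1}|k-\xi|^{-(p-\nu)}$ to $\int_{M+1}^\infty r^{-(p-\nu)}dr$ do not depend on the fractional part of $\xi$. This follows from the matching argument above (each dyadic-type block $[m,m+1)$ around $\xi$ contains at most one or two integers, regardless of $\xi$), so there is no real obstacle, just bookkeeping. The constraint $1<p\le2$ is used only to guarantee that $p-1\in(0,1]$, so that the threshold $\nu=p-1$ actually lies inside the stated range $0\le\nu\le1$; the argument itself does not otherwise need $p\le2$.
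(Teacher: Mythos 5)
Your proposal is correct and follows essentially the same route as the paper: split the sum at $|k-\log u|>M$, use the two-sided bound to reduce the tail to $\sum|k-\log u|^{\nu-p}$, which diverges for $p-\nu\le 1$ (the $C_1$ lower bound) and converges uniformly for $p-\nu>1$ (the $C_2$ upper bound). The only cosmetic differences are that you bound the finite block by local boundedness and term-counting where the paper invokes $M_0(\chi)<+\infty$, and that you spell out the uniformity in $u$ of the integral comparison, which the paper merely asserts.
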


\begin{proof}
Let $p-1\leq \nu \leq 1,$ and $u \in \mathbb{R^{+}}$ be fixed. Since $p-\nu\leq1,$ we have
\begin{eqnarray*}
M_{\nu}(\chi) &\geq &\sum_{k=-\infty}^{k=+\infty}|\chi(e^{-k}u)| |\log u-k|^{\nu}\\
&\geq &\sum_{k=-\infty}^{k=+\infty}|\chi(e^{-k}u)| |\log u-k|^{\nu}\frac{|\log u-k|^{p}}{|\log u-k|^{p}}\\
&\geq & C_{1}\sum_{|\log u-k|>M}|\log u-k|^{\nu-p}=+\infty.
\end{eqnarray*}
Let $0\leq \nu<p-1$ and $u\in \mathbb{R^{+}}.$ Since $M_{0}(\chi)<+\infty$ and $\chi(u)=O(|\log u|^{-p})$, as $|u|\rightarrow +\infty$ with $p>1$ and using the fact that
the series$$ \sum_{|\log u-k|>M}|\log u-k|^{\nu-p}$$ with $p-\nu >1$, is uniformly convergent for every $u\in \mathbb{R^{+}},$ we obtain
\begin{eqnarray*}
\sum_{k=-\infty}^{k=+\infty}|\chi(e^{-k}u)| |\log u-k|^{\nu}&\leq& \left(\sum_{|\log u-k|\leq M}+\sum_{|\log u-k|>M}\right)|\chi(e^{-k}u)| |\log u-k|^{\nu}\\
&\leq &M^{\nu}M_{0}(\chi)+C_{2}\sum_{|\log u-k|>M}|\log u-k|^{\nu-p}<+\infty.
\end{eqnarray*}
Hence, the proof is completed.
\end{proof}

\subsection{Implementation Results}

We show the approximation of discontinuous function
\begin{eqnarray*}
f(t)&=&\left\{
\begin{array}{ll}
\dfrac{2}{1+et} , &  \ \ \ \ \ 0<t<\dfrac{1}{e} \\
\vspace{0.25 cm}
2, & \ \ \  \ \ \ \dfrac{1}{e}\leq t< e \\
\vspace{0.25 cm}

3, & \ \ \  \ \ \ \ e\leq t< 4 \\
\vspace{0.25 cm}

\dfrac{10}{4+t}, & \ \ \  \ \ \ t\ge 4
\end{array}
\right.
\end{eqnarray*}
by $I_w^{\chi}{f}$ at jump discontinuities at $t=\dfrac{1}{e},$ $t=e$ and $t=4.$ We define
\begin{eqnarray*}
\chi_c(t)=\dfrac{2}{5}\bar{B}_{2}(te^{-2})+\dfrac{3}{5}\bar{B}_{2}(te),
\end{eqnarray*}
where the kernel $\bar{B}_{2}$ is defined by
\begin{eqnarray*}
\bar{B}_{2}(x)&=&\left\{
\begin{array}{ll}
1-\log x, &  \ \ \ \ \ 1<x<e \\
\vspace{0.25 cm}
1+\log x, & \ \ \  \ \ \ \dfrac{1}{e}<x<1
\\
\vspace{0.25 cm}
0, & \ \ \  \ \ \ \mbox{otherwise.}
\end{array}
\right.
\end{eqnarray*}
It is easy to verify that $\chi_c(t)$ satisfies the conditions (i), (ii), (iii) and $\chi_c(u)=0,$ for every $u\in[1,e).$ Further we observe that the condition (i) of Theorem \ref{t5} is also satisfied with $\alpha=\dfrac{3}{5}.$ In view of Theorem \ref{t5} and Theorem \ref{t11}, we have
$I_w^{\chi_c}{f}\rightarrow \dfrac{3}{5}f(t+0)+\dfrac{2}{5}f(t-0)$ as $w\rightarrow \infty$ at a jump discontinuity points $t==\dfrac{1}{e},$ $t=e$ and $t=4.$

Next we consider discontinuous kernels: $\chi_{d}(t):=\chi_c(t)+\theta(t),$ $t\in\mathbb{R}^{+},$ where
\begin{eqnarray*}
\chi_c(t)=\dfrac{2}{5}\bar{B}_{2}(te^{-2})+\dfrac{3}{5}\bar{B}_{2}(te).
\end{eqnarray*}
and $\theta(t)$ is defined by
\begin{eqnarray*}
\theta (t)&=&\left\{
\begin{array}{ll}
1, &  \ \ \ \ \ t=e,\dfrac{1}{e}\\
\vspace{0.25 cm}

-1, &  \ \ \ \ \ t=e^2,\dfrac{1}{e^2}\\

\vspace{0.25 cm}
0, & \ \ \  \ \ \ \mbox{otherwise.}
\end{array}
\right.
\end{eqnarray*}
Then, we observe that $\displaystyle\sum_{k\in \mathbb{Z}}\theta(e^{-k}u)=0,$ and $ \psi_{\chi}^{-}(u)=0$ for every $u\in[1,e).$ Also we observe that $\chi_{d}(u)$ is not necessarily a continuous (see Fig.4) and it satisfies the condition (i), (ii), (iii) and $\chi_{d}(u)=0,$ for every $u\in[1,e).$ Again from Theorem \ref{t5} and Theorem \ref{t11}, we see that $I_w^{\chi_{d}}{f}$ converges to $\dfrac{3}{5}f(t+0)+\dfrac{2}{5}f(t-0)$ as $w\rightarrow \infty$ at a jump discontinuity points $t==\dfrac{1}{e},$ $t=e$ and $t=4.$ The convergence of $I_w^{\chi_c}{f}$ and $I_w^{\chi_{d}}{f}$ at discontinuity points $t=\dfrac{1}{e},$ $t=e$ and $t=4$ of the function $f$ has been tested and numerical results have been provided in Tables 1, 2 and 3.

\begin{figure}[h]
\centering
{\includegraphics[width=0.8\textwidth]{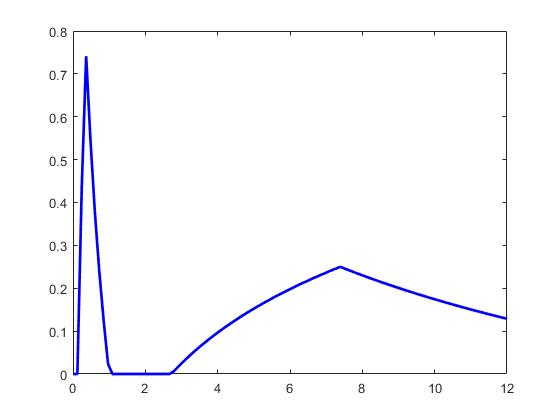}}
\caption{Plot of the kernel $\chi_c(t)=\dfrac{2}{5}\bar{B}_{2}(te^{-2})+\dfrac{3}{5}\bar{B}_{2}(te).$}
\end{figure}

\begin{figure}[h]
\centering
{\includegraphics[width=0.8\textwidth]{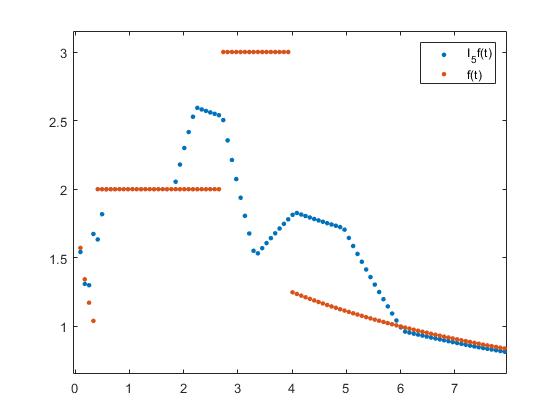}}
\caption{Approximation of $f(t)$ by $I_w^{\chi}{f}$ based on $\chi_c(t)=\dfrac{2}{5}\bar{B}_{2}(te^{-2})+\dfrac{3}{5}\bar{B}_{2}(te)$ for $w=5.$}
\end{figure}

\begin{figure}[h]
\centering
{\includegraphics[width=0.8\textwidth]{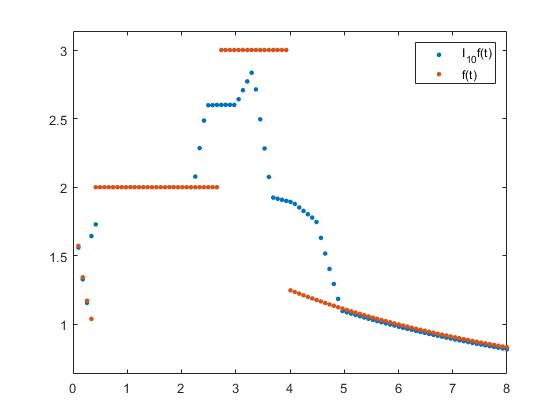}}
\caption{Approximation of $f(t)$ by $I_w^{\chi}{f}$ based on $\chi_c(t)=\dfrac{2}{5}\bar{B}_{2}(te^{-2})+\dfrac{3}{5}\bar{B}_{2}(te)$ for $w=10.$}
\end{figure}

\clearpage

\begin{tb}\label{table1}
 {\it Approximation of $f$ at the jump discontinuity point $t=\dfrac{1}{e}$ by the Kantorovich exponential sampling series $I_w^{\chi_c}{f}$ and $I_w^{\chi_{d}}{f}$ based on $\chi_c(t),$ $\chi_{d}(t)$ for different values of $w>0.$ The theoretical limit is:
 $$\lim_{w\rightarrow \infty}(I_w^{\chi_c}{f})\left(\dfrac{1}{e}\right)=\lim_{w\rightarrow \infty}(I_w^{\chi_{d}}{f})\left(\dfrac{1}{e}\right)
 =\dfrac{3}{5}f\left(\dfrac{1}{e}+0\right)+\dfrac{2}{5}f\left(\dfrac{1}{e}-0\right)=1.60.$$}
\begin{center}
\begin{tabular} {|l|l|l|l|l|l|l|l|l|l|l|}\hline
 $w$ & $ 1$ & $2$ & $3$ & $4$ & $ 5$ & $10$ & $20$ & $50$ & $100$ & $200$\\
 \hline
 $I_w^{\chi_c}{f}$&$1.8509$ & $1.7427$ & $1.6978$ & $1.6740$ &$1.6595$ & $1.6299$ & $1.6150$ & $1.6060$ & $1.6030$ & $1.6015$\\
  \hline
 $I_w^{\chi_d}{f}$&$1.6610$ & $-0.4903$ & $2.7807$ & $-0.4487$ &$1.6595$ & $1.6299$ & $1.6150$ & $1.6060$ & $1.6030$ & $1.6015$\\
   \hline
\end{tabular}
\end{center}
   \end{tb}

\begin{tb}\label{table2}
 {\it Approximation of $f$ at the jump discontinuity point $t={e}$ by the Kantorovich exponential sampling series $I_w^{\chi_c}{f}$ and $I_w^{\chi_{d}}{f}$ based on $\chi_c(t),$ $\chi_{d}(t)$ for different values of $w>0.$ The theoretical limit is:
 $$\lim_{w\rightarrow \infty}(I_w^{\chi_c}{f})(e)=\lim_{w\rightarrow \infty}(I_w^{\chi_{d}}{f})(e)
 =\dfrac{3}{5}f(e+0)+\dfrac{2}{5}f(e-0)=2.60.$$}
\begin{center}
\begin{tabular}{|l|l|l|l|l|l|l|l|l|l|l|}\hline
 $w$ & $ 1$ & $2$ & $3$ & $4$ & $ 5$ & $10$ & $20$ & $50$ & $100$ & $200$\\
 \hline
 $I_w^{\chi_c}{f}$&$1.1746$ & $1.4158$ & $1.6687$ & $2.1126$ &$2.5316$ & $2.600$ & $2.600$ & $2.600$ & $2.600$ & $2.600$\\
  \hline
 $I_w^{\chi_d}{f}$&$2.8973$ & $0.6722$ & $1.6687$ & $2.1126$ &$2.5316$ & $2.600$ & $2.600$ & $2.600$ & $2.600$ & $2.600$\\
   \hline
\end{tabular}
\end{center}
   \end{tb}

\begin{tb}\label{table3}
 {\it Approximation of $f$ at the jump discontinuity point $t=4$ by the Kantorovich exponential sampling series $I_w^{\chi_c}{f}$ and $I_w^{\chi_{d}}{f}$ based on $\chi_c(t),$ $\chi_{d}(t)$ for different values of $w>0.$ The theoretical limit is:
 $$\lim_{w\rightarrow \infty}(I_w^{\chi_c}{f})(4)=\lim_{w\rightarrow \infty}(I_w^{\chi_{d}}{f})(4)
 =\dfrac{3}{5}f(4+0)+\dfrac{2}{5}f(4-0)=1.95.$$}
\begin{center}
\begin{tabular}{|l|l|l|l|l|l|l|l|l|l|l|}\hline
 $w$ & $ 1$ & $2$ & $3$ & $4$ & $ 5$ & $10$ & $20$ & $50$ & $100$ & $200$\\
 \hline
 $I_w^{\chi_c}{f}$&$1.0941$ & $1.2847$ & $1.4307$ & $1.6287$ &$1.8092$ & $1.8939$ & $1.9219$ & $1.9388$ & $1.9444$ & $1.9472$\\
  \hline
 $I_w^{\chi_d}{f}$&$1.0941$ & $1.2847$ & $1.4307$ & $1.6287$ &$1.8092$ & $1.8939$ & $1.9219$ & $1.9388$ & $1.9444$ & $1.9472$\\
   \hline
\end{tabular}
\end{center}
   \end{tb}

\begin{figure}[h]
\centering
{\includegraphics[width=0.8\textwidth]{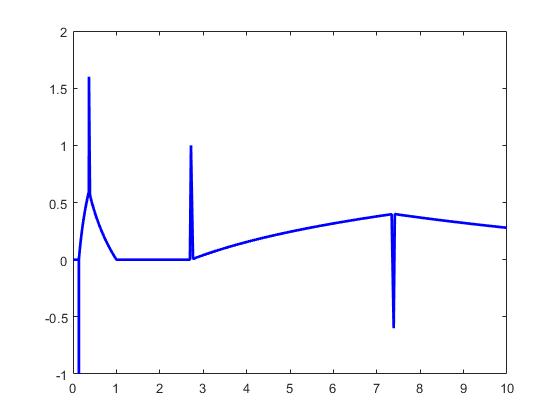}}
\caption{Plot of the kernel $\chi_{d}(t)=\chi_c(t)+\theta(t).$}
\end{figure}

\begin{figure}[h]
\centering
{\includegraphics[width=0.8\textwidth]{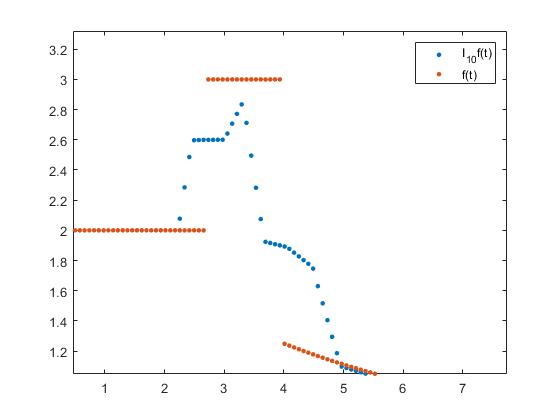}}
\caption{Approximation of $f(t)$ by $I_w^{\chi}{f}$ based on $\chi_{d}(t)=\chi_c(t)+\theta(t)$ for $w=10.$}
\end{figure}

\clearpage

{\bf Acknowledgments.} The first and second author is supported by DST-SERB, India Research Grant EEQ/2017/000201. The third author P. Devaraj has been supported by DST-SERB Research Grant MTR/2018/000559.

\end{document}